\theoremstyle{definition}
\newtheorem{theorem}{Theorem}[section]
\newtheorem{example}{Example}
\newtheorem{remark}{Remark}[section]
\newtheorem{lemma}{Lemma}[section]
\numberwithin{equation}{section}%
\numberwithin{table}{section}%
\numberwithin{figure}{section}
\def\3bar{{|\hspace{-.02in}|\hspace{-.02in}|}}
\newcommand\grad{\operatorname{grad}}
\renewcommand\div{\operatorname{div}}
\newcommand\curl{\operatorname{curl}}
\newcommand\Span{\operatorname{span}}
\def\d{\text{d}}
\begin{document}
\title[]{Three families of  grad-div-conforming finite elements}

\keywords
{$H(\grad \div)$-conforming,  finite elements , de Rham complexes, exterior calculus, quad-div problem.}

\author{Qian Zhang}
\email{go9563@wayne.edu}
\address{Department of Mathematics, Wayne State University, Detroit, MI 48202, USA. }

\author{Zhimin Zhang}
\email{zmzhang@csrc.ac.cn; zzhang@math.wayne.edu}
\address{Beijing Computational Science Research Center, Beijing, China; Department of Mathematics, Wayne State University, Detroit, MI 48202, USA}
\thanks{This work is supported in part by the National Natural Science Foundation of China grants NSFC 11871092 and NSAF U1930402.}

\subjclass[2000]{65N30 \and 35Q60 \and 65N15 \and 35B45}

\date{\today}
\begin{abstract} 
Several smooth finite element de Rham complexes are constructed in three-dimensional space, which yield three families of grad-div conforming finite elements. The simplest element has only 8  degrees of freedom (DOFs) for a tetrahedron and 14 DOFs for a cuboid. These elements naturally lead to conforming approximations to quad-div problems. 
Numerical experiments for each family validate the correctness and efficiency of the elements for solving the quad-div problem. 

\end{abstract}	
\maketitle
\section{Introduction}
We are concerned in this paper with the grad-div or $H(\grad\div)$ finite  elements used for conforming discretizations of problems involving a quad-div operator (sometimes referred to as a fourth-order div operator). 
The quad-div operator appears in linear elasticity \cite{altan1992structure,mindlin1963microstructure,mindlin1965second}, where the integration of $\left(\nabla(\nabla\cdot \bm u)\right)^2$ represents the shear strain energy with the displacement of the elasticity body $\bm u$.
 Moreover, the quad-div operator can be written as $\big((\nabla\cdot)^*\circ (\nabla\cdot)\big)^*\circ (\nabla\cdot)^*\circ (\nabla\cdot)$ which is one of the fourth-order operators of the formulation $(D^*\circ D)^*\circ D^*\circ D$. The biharmonic operator $\Delta^2$ and the quad-curl operator $(\nabla\times)^4$ are two well-known fundamental fourth-order operators of this formulation, which have been studied extensively. Many numerical methods have been proposed for problems involving those two fourth-order operators. We refer to \cite{WZZelement,quad-curl-eig-posterior,HZZcurlcurl2D,zhang2009family,argyris1968tuba,vzenivsek1973polynomial} for conforming finite element methods and \cite{morley1968triangular,Zheng2011A,Sun2016A,Qingguo2012A,Brenner2017Hodge,quadcurlWG, Zhang2018M2NA,Chen2018Analysis164, Zhang2018Regular162,WangC2019Anew101,BrennerSC2019Multigrid100} for other methods. However, unlike the biharmonic operator and the quad-curl operator, very limited work has been done for problems involving the quad-div operator \cite{fan2019mixed}. In this paper, we will propose grad-div conforming  elements which can lead to conforming approximations of the quad-div problem.
 
We apply discrete de Rham complexes to construct three families of grad-div conforming  finite elements in three space dimensions (3D). The discrete de Rham complex with appropriate smoothness has been an important and useful tool in designing finite elements and analyzing numerical schemes, c.f., \cite{arnold2018finite, arnold2010finite, arnold2006finite, hiptmair1999canonical, neilan2015discrete,christiansen2018nodal}. Based on the de Rham complex with minimal smoothness, various well-known finite elements for computational electromagnetism or diffusion problems have been arranged in the finite element periodic table \cite{arnold2014periodic}. Motivated by problems in fluid and solid mechanics, there is a growing interest in constructing finite element de Rham complexes with enhanced smoothness, sometimes also referred to as Stokes complexes \cite{falk2013stokes,christiansen2016generalized,HZZcurlcurl2D}. In this paper, for the conforming discretization of the quad-div problem, we will consider another variant of the de Rham complex, i.e.,
\begin{align}\label{3D:quad-div}
\begin{diagram}
\mathbb{R} & \rTo^{\subset} & H^{1}(\Omega) & \rTo^{\nabla} & H(\curl; \Omega) & \rTo^{\nabla\times} & H(\grad \div;\Omega)& \rTo^{\nabla\cdot} & H^1(\Omega)  & \rTo^{} & 0,
\end{diagram}
\end{align}
where  $\Omega$ is a bounded Lipschitz domain in $\mathbb{R}^{3}$. The precise definition of the notations used in the above complex is given in the subsequent sections. For simplicity of presentation, throughout this paper,  we will assume that $\Omega$ is contractible. Then the exactness of \eqref{3D:quad-div} follows from standard results in, e.g., \cite{arnold2018finite}. The two dimensional (2D) version of the complex \eqref{3D:quad-div} is
\begin{align}\label{2D:quad-div}
\begin{diagram}
\mathbb{R} & \rTo^{\subset} & H^{1}(\Omega) & \rTo^{\bm\nabla\times} & H(\grad \div;\Omega)& \rTo^{\nabla\cdot} & H^1(\Omega)  & \rTo^{} & 0,
\end{diagram}
\end{align}
where $\bm \nabla\times=(\partial_{x_2}u, -\partial_{x_1}u)^T$. 
If we rotate the complex \eqref{2D:quad-div} by $\frac{\pi}{2},$ we will get the following complex
\begin{align}
\begin{diagram}
\mathbb{R} & \rTo^{\subset} & H^{1}(\Omega) & \rTo^{{\nabla}} & H(\curl^2;\Omega)& \rTo^{\nabla\times} & H^1(\Omega)  & \rTo^{} & 0
\end{diagram}
\end{align}
with $H(\curl^2;\Omega):=\{\bm u\in \bm L^2(\Omega): \nabla\times \bm u\in L^2(\Omega) \text{ and } \bm \nabla\times\nabla\times \bm u\in \bm L^2(\Omega)\}$,
 which has been studied in \cite{HZZcurlcurl2D}.
 Therefore in this paper, we will only focus on the complex \eqref{3D:quad-div} to construct 3D grad-div elements.
 
Our new finite elements fit into a subcomplex of \eqref{3D:quad-div}:
\begin{align}\label{discrete-complex}
\begin{diagram}
\mathbb{R} & \rTo^{\subset} & \Sigma_h & \rTo^{\nabla} & V_{h} & \rTo^{\nabla\times} & W_h & \rTo^{\nabla\cdot} & \Sigma^{+}_h  & \rTo^{} & 0.
\end{diagram}
\end{align}
A starting point is to take $\Sigma_h$ as $C^0$ Lagrange finite element spaces.  This leads to a natural choice of $V_h$, which is the first family of N\'ed\'elec elements. In addition, we choose Lagrange elements enriched with an interior bubble for $\Sigma_h^+$. The space $W_h\subset H(\grad \div;\Omega)$ is hence obtained as the curl of $V_h$ plus a complementary part, mapped onto $\Sigma_h^+$ by $\div$. Different orders of $\Sigma_h$  can yield different versions of $W_h$.  
Among the three versions of $W_h$ which we will construct in this paper, the simplest element has only 8 DOFs for a tetrahedron and 14 DOFs for a cuboid. The space $W_h$ can be utilized as a conforming finite element space for solving the quad-div problem.

The tool of discrete complexes makes it possible to reach the goal of this paper, i.e., constructing grad-curl conforming elements. Moreover, by this tool, we also fit the quad-div problem and its conforming finite element approximations into the framework of the finite element exterior calculus (FEEC) \cite{arnold2018finite,arnold2006finite} and hence enable the use of various tools from FEEC for the numerical analysis. For instance, we construct interpolation operators that commute with the differential operators. After that, the convergence result follows from a standard argument. 

To validate the newly proposed elements, we fit them in the  theoretical framework of the conforming finite element method and obtain the approximation property of the numerical solution. Moreover, we carry out a numerical experiment, which validates our theoretical results.

The remaining part of the paper is organized as follows. In Section 2, we present notations and some basic facts from homological algebra.
In Section 3, we define shape functions which form local exact sequences and prove their properties. 
In Section 4, we construct three families of grad-curl conforming finite elements on tetrahedra and develop some theoretical results for them. In Section 5, we introduce the counterparts for cuboids. In Section 6, we present a regularity estimate for the quad-div problem and apply the new elements to the problem. In Section 7, we provide numerical examples to verify the correctness and efficiency of our methods. Finally, concluding remarks and future work are given in Section 8.

\section{Preliminaries}

\subsection{Notations}

Unless otherwise specified, throughout the paper we assume $\Omega\in\mathbb{R}^3$ is a contractible Lipschitz domain. We adopt conventional notations for Sobolev spaces such as $H^m(D)$ or $H^m_0(D)$ on a contractible sub-domain $D\subset\Omega$ furnished with the norm $\left\|\cdot\right\|_{m,D}$ and the semi-norm $\left|\cdot\right|_{m,D}$. In the case of $m=0$, the space $H^{0}(D)$ coincides with $L^2(D)$ which is equipped with the inner product $(\cdot,\cdot)_D$ and the norm $\left\|\cdot\right\|_D$. When $D=\Omega$, we drop the subscript $D$. We use  $\bm H^{m}(D)$   and ${\bm L}^2(D)$ to denote the vector-valued Sobolev spaces  $\left(H^{m}(D)\right)^3$ and $\left(L^2(D)\right)^3$.


In addition to the standard Sobolev spaces, we also define
\begin{align*}
	H(\text{curl};D)&:=\{\bm u \in {\bm L}^2(D):\; \nabla \times \bm u \in \bm L^2(D)\},\\
	H(\text{div};D)&:=\{\bm u \in {\bm L}^2(D):\; \nabla \cdot  \bm u \in L^2(D)\},\\
	H(\grad \text{div};D)&:=\{\bm u \in {\bm L}^2(D):\; \nabla \cdot  \bm u \in H^1(D)\}.
\end{align*}

For a subdomain $D$, a face $f$, or an edge $e$, we use $P_k$ to represent the space of 
polynomials with degree at most $k$, and $\tilde P_k$, the space of homogenous polynomials of degree $k$. The corresponding sets of vector polynomials are denoted as $\bm P_k=\left(P_k(D) \right)^3$ and $\tilde{\bm P}_k=(\tilde P_k(D) )^3$, respectively. 
We also define
\begin{align*}
&\mathcal R_k=\bm P_{k-1}\oplus \mathcal S_k \text{\ with\ }\mathcal S_k=\{{\bm p}\in \tilde{\bm P_k}\big| \ \bm x\cdot \bm p=0\},
\end{align*}
whose dimension is
\begin{align*}
&\dim{\mathcal R_k}=\frac{k(k+2)(k+3)}{2}.
\end{align*}

We use $Q_{i,j,k}(D)$ to denote the polynomials with three variables $(x_1, x_2, x_3)$ where the maximal degree is $i$ in $x_1$, $j$ in $x_2$, and $k$ in $x_3$. For simplicity, we drop the subscripts $i$ and $j$ when $i=j=k$.  Similarly, we use $Q_{i,j}(f)$ to denote such polynomial spaces in 2D. 


Let \,$\mathcal{T}_h\,$ be a partition of the domain $\Omega$
consisting of tetrahedra or cuboids. We denote $h_K$ as the diameter of an element $K \in
\mathcal{T}_h$ and $h$ as the mesh size of $\mathcal {T}_h$.
We adopt the following Piola mapping to relate the finite element function $\bm u$ on a general element $K$ to a function $\hat{\bm u}$ on the reference element $\hat K$ (the tetrahedron with vertices $(0,0,0)$, $(1,0,0)$, $(0,1,0)$, and $(0,0,1)$ or the cube with vertices $(0,0,0)$, $(1,0,0)$, $(0,1,0)$, $(1,1,0)$, $(0,0,1)$, $(1,0,1)$, $(0,1,1)$, and $(1,1,1)$):
\begin{align}
\bm u \circ F_K = \frac{B_K}{\det(B_K)} \hat{\bm u},\label{mapping-u}
\end{align}
where the affine mapping
\begin{align}\label{mapping-domain}
F_K(\bm x)= B_K\hat{\bm x}+ \bm b_K.
\end{align}
By a simple computation, we have
\begin{align}
(\nabla\cdot\bm u) \circ F_K &= \frac{1}{\det(B_K)} \hat{\nabla}\cdot\hat{ \bm u},\label{mapping-curlu}\\
\bm n\circ F_K&= \frac{B_K^{-T} \hat {\bm n}}{\left\|B_K^{-T} \hat {\bm n}\right\|}.\label{n}
\end{align}

We use $C$ to denote a generic positive $h$-independent constant. 

\subsection{Basic facts from homological algebra}
We review some basic facts from homological algebra. For further details, we refer to, for example,  \cite{arnold2018finite}. A differential complex is a sequence of spaces $V^{i}$ and operators $d^{i}$:
\begin{align}\label{general-complex}
\begin{diagram}
0 & \rTo & V^{1} & \rTo^{d^{1}} &V^{2}&\rTo^{d^{2}} & \cdots  &  \rTo^{d^{n-1}} &V^{n} & \rTo^{d^n} & 0,
\end{diagram}
\end{align}
with $d^{i+1}d^{i}=0$ for $i=1, 2, \cdots, n-1$. Denote $\mathcal{N}(d^{i})$ as the kernel space of the operator $d^{i}$ in $V^{i}$ and $\mathcal{R}(d^{i})$ as the image of the operator $d^{i}$ in $V^{i+1}$. Due to the definition of a complex, the inclusion $\mathcal{N}(d^{i})\subset \mathcal{R}(d^{i-1})$ holds for each $i\geq 2$. Furthermore, if $\mathcal{N}(d^{i})= \mathcal{R}(d^{i-1})$, we say that the complex \eqref{general-complex} is exact at $V^{i}$. At the two ends of the sequence, the complex is exact at $V^{1}$ if $d^{1}$ is injective (with trivial kernel), and is exact at $V^{n}$ if $d^{n}$ is surjective (with trivial cokernel).  The complex \eqref{general-complex} is referred to as exact if it is exact at all the spaces $V^{i}$. If each space in \eqref{general-complex} has finite dimensions, then a necessary (but not sufficient) condition for the exactness of \eqref{general-complex} is the following dimension condition:
$$
\sum_{i=1}^{n} (-1)^{i}\dim (V^{i})=0. 
$$


\section{Local spaces and polynomial complexes}

To define a finite element space, we must supply, for each element $K\in\mathcal{T}_h$, 
\begin{itemize}
	\item shape functions;
	\item DOFs to guarantee appropriate continuity.
\end{itemize}
In this section, we will specify shape functions for each space involved in the complex \eqref{discrete-complex}. 
The local complex of function spaces on each $K\in \mathcal{T}_h$ for \eqref{discrete-complex} is denoted as follows: 
\begin{align}\label{local-complex}
\begin{diagram}
\mathbb{R} & \rTo^{\subset} & \Sigma^{r}_h(K) & \rTo^{\nabla} &V^{r}_{h}(K) & \rTo^{\nabla\times} & W_h^{r-1,k}(K)& \rTo^{\nabla\cdot} & \Sigma_h^{+,k-1}(K)   & \rTo^{} & 0.
\end{diagram}
\end{align}
In addition to the complex \eqref{discrete-complex}, we introduce the complex \eqref{local-complex} with two parameters $r$ and $k$ to specify degrees of spaces, which lead to several versions of complexes.

We let $\Sigma^{r}_h(K)$ be $P_{r}(K)$ for a tetrahedral element or $Q_{r}(K)$ for a cuboid element, and 
 let $V^{r}_h(K)$ be $\mathcal R_{r}(K)$ for a tetrahedral element or $Q_{r-1,r,r}(K)\times Q_{r,r-1,r}(K)\times Q_{r,r,r-1}(K)$ for a cuboid element. 
For a tetrahedral element $K$, we set
$$
\Sigma_h^{+,k-1}(K)=
\begin{cases}
	\Sigma_h^{k-1}(K),& k\geq 5,\\
	\Sigma_h^{k-1}(K)\oplus \Span\{B_t\},& k=2,3,4,
	\end{cases}
$$
where $B_t=\lambda_1\lambda_2\lambda_3\lambda_4$ with the barycentric coordinate $\lambda_i$. For a cuboid element $K$, we set
$$
\Sigma_h^{+,k-1}(K)=
\begin{cases}
	\Sigma_h^{k-1}(K),& k\geq 3,\\
	\Sigma_h^{k-1}(K)\oplus \Span\{B_c\},& k=2,
	\end{cases}
$$
where $B_c=\left(x-x_l\right)\left(x-x_r\right)\left(y-y_f\right)\left(y-y_b\right)\left(z-z_d\right)\left(z-z_u\right)$ with the element $K=(x_l,x_r)\times(y_f,y_b)\times (z_d,z_u)$. 
We define
\begin{align}\label{Vh2}
	W_h^{r-1,k}(K)=\nabla\times V^{r}_h(K)\oplus \mathfrak{p}\Sigma^{+,k-1}_h(K),
\end{align}
with $\mathfrak{p}: C^{\infty}(\mathbb{R}^{3})\mapsto \left [C^{\infty}(\mathbb{R}^{3})\right ]^{3}$ is an operator which maps a scalar function to a vector field:
$$
\mathfrak{p} u:=\int_{0}^{1}t^2 \bm{x}u(t\bm x)\, dt,
$$
where
$
\bm{x}:=(x_{1}, x_{2}, x_3)^T.
$
As a special case of the Poincar\'{e} operators (c.f. \cite{hiptmair1999canonical, christiansen2016generalized}), $\mathfrak{p}$ has the following properties:
\begin{itemize}
\item the null-homotopy identity
\begin{equation}\label{null-homotopy}
\nabla\cdot \mathfrak{p}u=u,~ \forall u\in C^{\infty}(\mathbb R^3);
\end{equation}
\item polynomial preserving property: if $u\in {P}_{r}(\mathbb{R}^{3})$, then $\mathfrak{p}u\in \bm {P}_{r+1}(\mathbb{R}^{3})$.
\end{itemize}

The above two properties can be obtained via simple calculations. By the null-homotopy identity \eqref{null-homotopy}, the right hand side of \eqref{Vh2} is a direct sum. 
\begin{lemma}
The local sequence \eqref{local-complex} is an exact complex. 
\end{lemma}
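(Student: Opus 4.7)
The plan is to verify each of the four conditions for exactness in turn, leaning on (a) the direct sum structure built into the definition of $W_h^{r-1,k}(K)$, (b) the null-homotopy identity $\nabla\cdot\mathfrak{p}u=u$, and (c) the classical exact polynomial de Rham complex on the contractible set $K$. First I would confirm that \eqref{local-complex} is indeed a complex: $\nabla$ carries $P_r$ into $\bm P_{r-1}\subset\mathcal R_r$ (and $Q_r$ into the tensor-product N\'ed\'elec-type space on the cube); $\nabla\times$ carries $V_h^r(K)$ into $W_h^{r-1,k}(K)$ tautologically, since $\nabla\times V_h^r(K)$ is the first summand in \eqref{Vh2}; and $\nabla\cdot$ carries $W_h^{r-1,k}(K)$ into $\Sigma_h^{+,k-1}(K)$ because $\nabla\cdot\nabla\times=0$ on the first summand and $\nabla\cdot\mathfrak{p}u=u\in\Sigma_h^{+,k-1}(K)$ on the second. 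The compositions vanish from the standard identities $\nabla\times\nabla=0$ and $\nabla\cdot\nabla\times=0$, plus the trivial fact that $\nabla$ kills the constant inclusion $\mathbb{R}\hookrightarrow\Sigma_h^r(K)$.

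Second, exactness at the two endpoints and at $W_h^{r-1,k}(K)$ is immediate from the null-homotopy identity. At $\Sigma_h^r(K)$, injectivity of $\mathbb{R}\hookrightarrow\Sigma_h^r(K)$ is obvious. At $\Sigma_h^{+,k-1}(K)$, surjectivity of $\nabla\cdot$ follows from $\nabla\cdot\mathfrak{p}u=u$, so every $u\in\Sigma_h^{+,k-1}(K)$ is the divergence of $\mathfrak{p}u\in W_h^{r-1,k}(K)$. For exactness at $W_h^{r-1,k}(K)$, take $\bm w\in W_h^{r-1,k}(K)$ with $\nabla\cdot\bm w=0$; the decomposition \eqref{Vh2} lets us write $\bm w=\nabla\times\bm v+\mathfrak{p}u$ uniquely, and applying $\nabla\cdot$ gives $0=u$, so $\bm w=\nabla\times\bm v\in\nabla\times V_h^r(K)$. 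The same calculation also verifies that the sum in \eqref{Vh2} is genuinely direct, as already noted after the definition.

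The one substantive step is exactness at $V_h^r(K)$, i.e., showing that $\ker\bigl(\nabla\times|_{V_h^r(K)}\bigr)=\nabla\Sigma_h^r(K)$. For the tetrahedral choice $\Sigma_h^r(K)=P_r(K)$, $V_h^r(K)=\mathcal R_r(K)$, this is the well-known exactness of the polynomial de Rham sequence
\[
\mathbb{R}\hookrightarrow P_r\xrightarrow{\nabla}\mathcal R_r\xrightarrow{\nabla\times}\cdots
\]
on the contractible set $\mathbb R^3$, as recorded in \cite{arnold2018finite}. For the cuboid choice, the analogous fact for the tensor-product (trimmed) $Q$-complex gives the same conclusion. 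In both cases a curl-free field in $V_h^r(K)$ is the gradient of a polynomial in $\Sigma_h^r(K)$ by the Poincar\'e lemma applied within this polynomial class.

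The step I expect to be the main obstacle is simply matching the polynomial bookkeeping in the cuboid case, where one must be careful that $\nabla$ really lands in the anisotropic space $Q_{r-1,r,r}\times Q_{r,r-1,r}\times Q_{r,r,r-1}$ and that a curl-free element of this space is the gradient of a $Q_r$ polynomial; this is standard but less transparent than the tetrahedral case. Everything else — the complex property, surjectivity of the final $\nabla\cdot$, and exactness at $W_h^{r-1,k}(K)$ — is a one-line consequence of $\nabla\cdot\mathfrak{p}=\mathrm{id}$ together with the direct sum in \eqref{Vh2}, and so the enrichment by $B_t$ or $B_c$ at low orders of $k$ enters the argument only through the fact that $\Sigma_h^{+,k-1}(K)$ is the prescribed image of $\nabla\cdot$.
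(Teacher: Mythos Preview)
Your proposal is correct and follows essentially the same approach as the paper: verify the complex property from the definitions, use the null-homotopy identity $\nabla\cdot\mathfrak{p}=\mathrm{id}$ together with the direct-sum decomposition \eqref{Vh2} to handle exactness at $W_h^{r-1,k}(K)$ and surjectivity onto $\Sigma_h^{+,k-1}(K)$, and invoke the standard polynomial de Rham exactness for the step at $V_h^r(K)$. The paper's proof is terser (it simply says exactness at $\Sigma_h^r(K)$ and $V_h^r(K)$ is ``easy to check''), while you spell out the reference to the classical polynomial complex and flag the cuboid bookkeeping, but the underlying argument is the same.
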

\begin{proof}
It's obvious that $\nabla\Sigma_h^r(K)\subset V_h^r(K)$. Since $W_h^{r-1, k}(K)=\nabla\times V_h^{r}(K)+\mathfrak p \Sigma_h^{+,k-1}(K)$ and the null-homotopy identity \eqref{null-homotopy},  we have $\nabla\times V^{r}_h(K)\subseteq W^{r-1, k}_h(K)$ and $\nabla\cdot W^{r-1, k}_h(K)= \Sigma_h^{+,k-1}(K)$. This shows that  \eqref{local-complex} is a complex. It remains to show the exactness. It's easy to check the exactness at $\Sigma_h^r(K)$ and $V_h^r(K)$. We now show that, for any $\bm v_h\in W^{r-1, k}_h(K)$ for which $\nabla\cdot\bm v_h=0$, there exists a $\bm u_h\in V^{r}_h(K)$ s.t. $\bm v_h=\nabla \times \bm u_h.$
Since $\bm v_h\in W^{r-1, k}_h$, we have $\bm v_h=\nabla\times \bm u_h+\mathfrak p w_h$ with $\bm u_h\in V^{r}_h(K)$ and $w_h\in \Sigma^{+,k-1}_h(K)$. By the null-homotopy identity \eqref{null-homotopy} again, $0=\nabla\cdot\bm v_h=w_h$. Therefore, $\bm v_h=\nabla\times \bm u_h.$ To prove the exactness at $\Sigma_h^{+,k-1}$, we only need to show the div operator $\nabla\cdot: W^{r-1, k}_h(K) \to \Sigma^{+,k-1}_h(K)$ is surjective. It is surjective since $\nabla\times W^{r-1, k}_h(K)=\Sigma^{+,k-1}_h(K)$. 
 \end{proof}

In the following lemma, we show that $W_h^{r-1,k}(K)$ contains some polynomial subspaces. It plays an essential role in analyzing the approximation properties of the finite element space $W_h^{r-1,k}$.

\begin{lemma}\label{Vh} The inclusion $\bm P_{r-1}(K)\subseteq W^{r-1, k}_h(K)$ holds when $r\leq k+1$. 
\end{lemma}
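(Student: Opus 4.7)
The plan is to split any $\bm p \in \bm P_{r-1}(K)$ into a divergence-free part handled by the curl term $\nabla\times V_h^r(K)$ and a ``gradient-like'' remainder handled by the $\mathfrak{p}$-term, exploiting the two properties of $\mathfrak{p}$ listed just before the lemma together with the exactness of the classical polynomial de Rham complex (first N\'ed\'elec family).

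Concretely, given $\bm p \in \bm P_{r-1}(K)$, I would first set $w := \nabla\cdot\bm p \in P_{r-2}(K)$. Because $r \le k+1$, we have $r-2 \le k-1$, so $w \in P_{k-1}(K) \subseteq \Sigma_h^{k-1}(K) \subseteq \Sigma_h^{+,k-1}(K)$ in every one of the listed cases. Next, form
\begin{equation*}
\bm q := \bm p - \mathfrak{p}w.
\end{equation*}
The polynomial-preserving property gives $\mathfrak{p}w \in \bm P_{r-1}(K)$ (since $w \in P_{r-2}$), and the null-homotopy identity \eqref{null-homotopy} yields $\nabla\cdot\bm q = \nabla\cdot\bm p - w = 0$. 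Thus $\bm q$ is a divergence-free vector polynomial in $\bm P_{r-1}(K)$.

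The remaining task, which is the main obstacle, is to show $\bm q \in \nabla\times V_h^r(K)$. For a tetrahedral $K$, this follows from the classical exact polynomial sequence
\begin{equation*}
\mathbb R \hookrightarrow P_{r+1}(K) \xrightarrow{\nabla} \mathcal R_r(K) \xrightarrow{\nabla\times} \mathcal D_r(K) \xrightarrow{\nabla\cdot} P_{r-1}(K) \to 0,
\end{equation*}
where $\mathcal D_r = \bm P_{r-1} \oplus \bm x \tilde P_{r-1}$: since $\bm P_{r-1} \subseteq \mathcal D_r$, our divergence-free $\bm q$ lies in $\ker(\nabla\cdot) \cap \mathcal D_r = \nabla\times \mathcal R_r(K) = \nabla\times V_h^r(K)$. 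For a cuboid $K$, the same conclusion follows from the corresponding exact polynomial complex associated with the first-family N\'ed\'elec and Raviart--Thomas spaces on a cube (one checks $\bm P_{r-1}$ embeds into the cuboid $H(\div)$-space and that a divergence-free element there is a curl of a $V_h^r(K)$ function).

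Combining the two ingredients gives $\bm p = \nabla\times\bm u + \mathfrak{p}w$ with $\bm u \in V_h^r(K)$ and $w \in \Sigma_h^{+,k-1}(K)$, so $\bm p \in W_h^{r-1,k}(K)$ by the definition \eqref{Vh2}. The only nontrivial check along the way is the hypothesis $r-2 \le k-1$, which is exactly the stated assumption $r \le k+1$; without it one could not place $\nabla\cdot\bm p$ into $\Sigma_h^{+,k-1}(K)$, and the decomposition would fail.
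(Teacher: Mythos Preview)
Your proof is correct and follows essentially the same strategy as the paper: decompose $\bm p\in\bm P_{r-1}(K)$ into a divergence-free piece lying in $\nabla\times V_h^r(K)$ and a remainder of the form $\mathfrak p w$ with $w\in P_{r-2}(K)\subseteq\Sigma_h^{+,k-1}(K)$, the last inclusion being precisely where $r\le k+1$ is used. The only cosmetic difference is that the paper first establishes the space identity $\bm P_{r-1}(K)=\nabla\times\bm P_r(K)\oplus\mathfrak p\,P_{r-2}(K)$ by a dimension count and then invokes the inclusion $P_{r-2}\subseteq\Sigma_h^{+,k-1}$, whereas you build the decomposition constructively for a given $\bm p$ and invoke the first-family exact sequence to place the divergence-free part directly in $\nabla\times\mathcal R_r(K)$; the underlying ingredients are identical.
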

\begin{proof}
	We claim that 
		\begin{align}\label{dcmp_Pr}
		\bm P_{r-1}(K)=\nabla \times \bm P_{r}(K)\oplus \mathfrak{p} P_{r-2}(K).
	\end{align}
	In fact, by the polynomial preserving property of $\mathfrak{p}$, $\nabla\times \bm P_{r}(K)\oplus\mathfrak{p} P_{r-2}(K)\subseteq \bm P_{r-1}(K)$. To show \eqref{dcmp_Pr}, it remains to show 
	the two sides of \eqref{dcmp_Pr} have the same dimension.	By the null-homotopy identity \eqref{null-homotopy}, the right hand side is a direct sum. Therefore, 
	\begin{align*}
		\dim & [\nabla \times \bm P_{r}(K)\oplus \mathfrak{p} P_{r-2}(K)]=\dim  \nabla \times \bm P_{r}(K)+\dim P_{r-2}(K)\\
		&=\dim [\bm P_{r}(K)\slash \nabla P_{r+1}(K)]+\dim P_{r-2}(K)\\
		&=\dim \bm P_{r}(K) - \dim P_{r+1}(K)+1+\dim P_{r-2}(K),
	\end{align*}
	which is exactly the dimension of $\bm P_{r-1}(K)$. 
Combining \eqref{dcmp_Pr} and the fact that $ P_{r-2}(K)\subseteq \Sigma^{+,k-1}_h(K)$, we get $\bm P_{r-1}(K)\subseteq \nabla\times \bm P_r(K)\oplus \mathfrak{p}\Sigma^{+,k-1}_h(K)\subseteq W^{r-1, k}_h(K)$.
\end{proof}


We are now ready to construct grad-div conforming finite elements and complexes. We focus on analyzing tetrahedral elements and only present the definition of cuboid elements in Section 5.

\section{Three families of grad-div conforming elements  on tetrahedra}\label{sec:tetrahedral}

The global discrete complex with specified degree for each space is given by
\begin{align}\label{discrete-complex-r-k}
\begin{diagram}
 \mathbb{R} & \rTo^{\subset} & \Sigma_h^{r} & \rTo^{\nabla} & V_{h}^{r} & \rTo^{\nabla\times} & W_h^{r-1,k}& \rTo^{\nabla\cdot} & \Sigma_h^{+,k-1}  & \rTo^{} & 0.
\end{diagram}
\end{align}
In this section, we construct grad-div conforming finite  elements and complexes on tetrahedra.  Assigning $r=k-1$, $k$, and $k+1$ in \eqref{discrete-complex-r-k} leads to three versions of grad-div conforming element spaces $W_h^{k-2,k}$, $W_h^{k-1,k}$, and $W_h^{k,k}$, for which, Fig. \ref{fig:third-family} demonstrate the case $k=2$.

\subsection{Degrees of freedom and global finite element spaces}\label{sec:dofs}
We define DOFs for the spaces in \eqref{discrete-complex-r-k}.

The DOFs for the Lagrange element $\Sigma^{r}_{h}$ can be given as follows.
\begin{itemize}
\item Vertex DOFs $ M_{v}({u})${\color{blue}:}
$$
M_{v}(u)=\left\{u\left({v}_{i}\right) \text{ for all vertices $v_i$}\right\}.
$$
\item Edge DOFs $M_{e}(u)${\color{blue}:} 
\begin{align*} M_{e}(u)=\left\{\frac{1}{\operatorname{length}(e_i)} \int_{e_i} u v \mathrm{d} s\text { for all } v \in P_{r-2}(e_i) \text { and for all edges }e_i\right\}.
\end{align*}
 \item Face DOFs $M_{f}(u)${\color{blue}:} 
\begin{align*} M_{f}(u)=\left\{\frac{1}{\operatorname{area}(f_i)} \int_{f_i} u v \mathrm{d} A\text { for all } v \in P_{r-3}(f_i) \text { and for all faces }f_i\right\}.
\end{align*}
\item Interior DOFs $M_{K}(u)$: 
$$M_{K}(u)=\left\{\frac{1}{\operatorname{volume}(K_i)} \int_{K_i} u v \mathrm{d} V \text{ for all } v \in P_{r-4}(K_i) \text{ and for all elements } K_i 
\right\}.$$
\end{itemize}
For $ u \in H^{3/2+\delta}(\Omega)$ with $\delta >0$,  we can
define an $H^1$ interpolation operator $\pi_h: H^{3/2+\delta}(\Omega)\rightarrow \Sigma_h^{r}$ by the above DOFs s.t.
 \begin{eqnarray}\label{def-inte-H1}
& M_v( u-\pi_h u)=\{0\},\  M_e(u-\pi_hu)=\{0\},\nonumber\\
& M_f(u-\pi_hu)=\{0\}, \text{ and}\  M_K( u-\pi_h u)=\{0\}.
\end{eqnarray}

The DOFs for $\Sigma_{h}^{+,k-1}$ can be given similarly, with only one additional interior integration DOF on $K$ to deal with the interior bubble function. We denote $\tilde\pi_h$ as the $H^1$ interpolation operator to $\Sigma_{h}^{+,k-1}$ by these DOFs.

%
%
 We choose the space $V_h^{r}$ as the first family of N\'ed\'elec elements, which has the following DOFs:
\begin{itemize}
\item Edge DOFs $\bm M_{e}(\bm u)$ (with a unit tangential vector $\bm \tau_i$):  
\begin{align*} \bm M_{e}(\bm u)=\left\{\int_{e_i} \bm u\cdot \bm \tau_i v \mathrm{d} s\text { for all } v \in P_{r-1}(e_i) \text { and for all edges }e_i\right\}.
\end{align*}
 \item Face DOFs $\bm M_{f}(\bm u)$ (with a unit normal vector $\bm n_i$): 
\begin{align*}\bm M_{f}(\bm u)=&
\left\{\frac{1}{\operatorname{area}(f_i)} \int_{f_i} \bm u \bm v \mathrm{d} A\text { for all } \bm v=B_K \hat {\bm v}, \hat {\bm v} \in \bm P_{r-2}(\hat f_i), \hat{\bm v}\cdot \hat{\bm n}_i=0\right. \\
&\text{\quad\quad\quad and for all faces }f_i\Big\}.
\end{align*}
\item Interior DOFs $\bm M_{K}(\bm u)$: 
\begin{align*}\bm M_{K}(\bm u)=&\left\{ \int_{K_i} \bm u \bm v \mathrm{d} V \text{ for all } \bm v =\frac{B_{K_i}}{\det(B_{K_i})}\hat{\bm v}\text{ with }\hat{\bm v}\in \bm P_{r-3}(\hat K_i)\right.\\
&\text{  \quad\quad\quad and for all elements } K_i\Big\} .
\end{align*}
\end{itemize}
Assuming that $\bm u\in \bm H^{1/2+\delta}(\Omega)$  and $\nabla\times\bm u\in L^p(\Omega)$ with $\delta\geq 0$ and $p>2$. By the above DOFs, we 
define an $H(\curl)$ interpolation operator $\bm r_h$ which maps to $V_h^r$ and satisfies
 \begin{eqnarray}\label{def-inte-H1}
 \bm  M_e(\bm u-\bm r_h\bm u)=\{0\}, 
 \bm M_f(\bm u-\bm r_h\bm u)=\{0\}, \text{ and}\  \bm M_K( \bm u-\bm r_h \bm u)=\{0\}.
\end{eqnarray}

We now equip the space $W^{r-1,k}_h$ with the following DOFs:
\begin{itemize}
		\item Vertex DOFs $\bm M_{ {v}}({\bm u})$ at all vertices $ {v}_{i}$ of each $K$:
	\begin{equation}\label{tridef1-1}
	\bm M_{ {v}}({\bm u})=\left\{( \nabla\cdot {\bm u})(  v_{i}),\; i=1,\;2\;,\cdots,4\right\}.
	\end{equation}
	\item Edge DOFs $\bm M_{ {e}}(  {\bm u})$ at all edges $ {e}_i$ of each ${K}$:
	\begin{align}
		 \bm M_{ {e}}(  {\bm u})=	\left\{\frac{1}{\operatorname{length}(e_i)}\int_{e_i}\nabla\cdot{\bm u}q\d s,\ \forall  {q}\in P_{k-3}( {e}_i), i=1,2,\cdots,6\right\}.\label{tridef1-2}
	\end{align}	
	\item Face DOFs $\bm M_{f}({\bm u})$ at all faces ${f}_i$ of each ${K}$ (with the unit  normal vector $ {\bm n}_i$):
	\begin{align}
		 \bm M_{f}({\bm u})=&\left\{\frac{1}{\operatorname{area}(f_i)}\int_{f_i}\nabla\cdot{\bm u}q\d A,\ \forall  {q}\in P_{k-4}( {f}_i), i=1,2,\cdots,4\right\}\nonumber\\
		 &\cup \left\{\int_{f_i}{\bm u}\cdot \bm n_iq\d A,\ \forall  {q}\in P_{r-1}( {f}_i), i=1,2,\cdots,4\right\},
		 \label{tridef1-3}
	\end{align}	
    \item Interior DOFs $\bm M_{ {K}}(  {\bm u})$ for each element $K$:
	\begin{align}\label{tridef1-4}
	&\bm M_{ {K}}(  {\bm u})=\left\{\int_{ {K}}  {\bm u}\cdot  {\bm q}\mathrm \d V,\ \forall \bm q=B_{K}^{-T}\hat{ \bm q},\ \hat{\bm q} \in  \mathcal{D} \right\},
	\end{align}
	where $\mathcal{D}=\nabla P_{k-5}( \hat K)\oplus [\bm {P}_{r-2} (\hat K)\slash \nabla P_{r-1}(\hat K)]$ when $k\geq 5$; $\mathcal{D}=[\bm {P}_{r-2} (\hat K)\slash \nabla P_{r-1}(\hat K)]$ when $k<5$ and $r\geq 2$; $\mathcal{D}=\emptyset$ when $k<5$ and $r<2$.
		\end{itemize}

\begin{figure}[!h]
\includegraphics[width=1\textwidth]{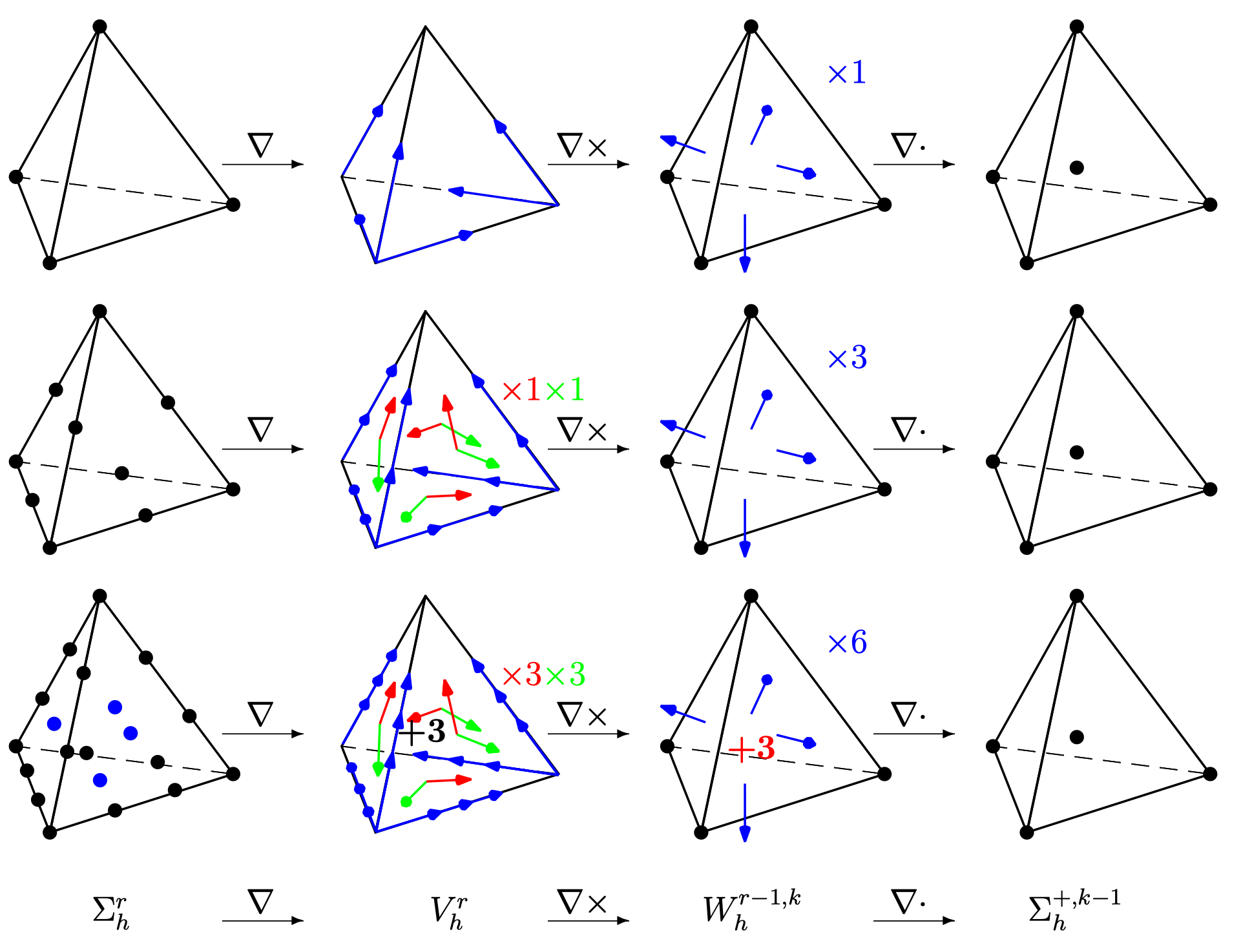}
\caption{The lowest-order ($k=2$) finite element complex \eqref{discrete-complex-r-k} on tetrahedra with $r=k-1$ in the first row, $r=k$ in the second row, and $r=k+1$ in the third row.}
\label{fig:third-family}
\end{figure}

\begin{lemma}\label{well-defined-conditions}
	The DOFs \eqref{tridef1-1}-\eqref{tridef1-4} are well-defined for any ${\bm u}\in \bm H^{1/2+\delta}({K})$ and ${\nabla}\cdot{\bm u}\in H^{3/2+\delta}({K})$ with $\delta>0$.
\end{lemma}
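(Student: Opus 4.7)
The plan is to check each group of functionals in \eqref{tridef1-1}--\eqref{tridef1-4} separately, matching it to the correct Sobolev embedding or trace theorem. The whole argument hinges on two classical facts in three dimensions: first, $H^s(K)\hookrightarrow C^0(\bar K)$ for $s>3/2$, and second, the scalar trace theorem $H^s(K)\to H^{s-1/2}(f)$ for $s>1/2$, together with the fact that it can be iterated to edges when $s$ is large enough.

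For the DOFs involving $\nabla\cdot\bm u$ (namely \eqref{tridef1-1}, \eqref{tridef1-2}, and the first line of \eqref{tridef1-3}), I would observe that $\nabla\cdot\bm u\in H^{3/2+\delta}(K)$ with $\delta>0$, so the embedding $H^{3/2+\delta}(K)\hookrightarrow C^0(\bar K)$ applies. This makes $\nabla\cdot\bm u$ continuous on the closure of $K$, which immediately legitimizes the point evaluations at vertices, and, by restriction to the closed edges and faces, the moment integrals $\int_{e_i}\nabla\cdot\bm u\, q\,ds$ and $\int_{f_i}\nabla\cdot\bm u\, q\,dA$ against any polynomial test function. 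No explicit trace theorem on an edge or face is even needed at this point, because continuity on $\bar K$ already gives continuity on every sub-entity.

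For the remaining face functionals in \eqref{tridef1-3}, namely the normal-trace moments $\int_{f_i}\bm u\cdot\bm n_i\, q\,dA$, I would invoke the standard trace theorem componentwise: since $\bm u\in\bm H^{1/2+\delta}(K)$ with $\delta>0$, we get $\bm u|_{f_i}\in\bm H^{\delta}(f_i)\subset \bm L^2(f_i)$, and hence $\bm u\cdot\bm n_i\in L^2(f_i)$, which pairs against any polynomial $q\in P_{r-1}(f_i)$ as an ordinary $L^2(f_i)$ inner product. For the interior DOFs \eqref{tridef1-4}, the functionals are just $L^2(K)$ inner products of $\bm u$, which already lies in $\bm L^2(K)\supset\bm H^{1/2+\delta}(K)$, against a finite-dimensional space of polynomial vector fields on $K$, so they are well-defined by Cauchy--Schwarz.

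I do not expect a serious obstacle. The two regularity thresholds $3/2+\delta$ for $\nabla\cdot\bm u$ and $1/2+\delta$ for $\bm u$ are precisely calibrated so that the needed embeddings and trace maps are continuous: any weaker assumption on $\nabla\cdot\bm u$ would fail to give pointwise values at vertices, and any weaker assumption on $\bm u$ would fail to give an $L^2$ normal trace on faces. The proof is therefore a direct, nearly mechanical verification, and the main care is merely bookkeeping to confirm that every functional has been covered by one of the three cases above.
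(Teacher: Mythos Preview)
Your proposal is correct and follows essentially the same approach as the paper's proof: the Sobolev embedding $H^{3/2+\delta}(K)\hookrightarrow C^{0}(\bar K)$ handles the vertex and edge DOFs involving $\nabla\cdot\bm u$, while trace theorems and Cauchy--Schwarz handle the face and interior DOFs. Your version is in fact slightly more explicit than the paper's, since you separate the two kinds of face DOFs in \eqref{tridef1-3} and spell out the trace argument $\bm u|_{f_i}\in \bm H^{\delta}(f_i)\subset \bm L^2(f_i)$ for the normal-trace moments, whereas the paper bundles all face DOFs under a single appeal to Cauchy--Schwarz.
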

\begin{proof}
	It follows from the Cauchy-Schwarz inequality that the face DOFs \eqref{tridef1-3} and the interior DOFs \eqref{tridef1-4} are well-defined since  ${\bm u}\in \bm H^{1/2+\delta}({K})$ and ${\nabla}\cdot{\bm u}\in H^{3/2+\delta}({K})$. By the embedding theorem, we have ${\nabla}\cdot{\bm u}\in H^{3/2+\delta}({K})\hookrightarrow C^{0,\delta}(K)$, then the DOFs in  \eqref{tridef1-1} and \eqref{tridef1-2} are well-defined.
\end{proof}

\begin{lemma}
The DOFs for $W^{r-1, k}_{h}(K)$ are unisolvent. 
\end{lemma}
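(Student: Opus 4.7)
The plan is the standard two-step verification of unisolvence: match the number of DOFs with $\dim W_h^{r-1,k}(K)$, then show that a shape function $\bm u$ with all DOFs vanishing must be zero. Throughout, I would treat the three choices $r\in\{k-1,k,k+1\}$ uniformly, keeping the dependence on $r$ and $k$ explicit.

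For the dimension count, using the direct sum decomposition \eqref{Vh2} and the injectivity of $\mathfrak p$ (a consequence of the null-homotopy identity \eqref{null-homotopy}), I would write
\[
\dim W_h^{r-1,k}(K) \;=\; \dim \nabla\times V_h^{r}(K) + \dim \Sigma_h^{+,k-1}(K),
\]
and then use exactness of \eqref{local-complex} at $V_h^r(K)$ (proved just above) to get $\dim \nabla\times V_h^r(K)=\dim V_h^r(K)-\dim\Sigma_h^r(K)+1$. Counting the DOFs stratum by stratum (vertex, edge, two face strata, interior) and splitting $\mathcal D$ into the $\nabla P_{k-5}(\hat K)$ and the $\bm P_{r-2}(\hat K)/\nabla P_{r-1}(\hat K)$ pieces, the totals match for each of the three values of $r$; this is a routine but case-by-case verification.

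For the vanishing, suppose $\bm u\in W_h^{r-1,k}(K)$ has all DOFs \eqref{tridef1-1}--\eqref{tridef1-4} equal to zero. I would first show $\nabla\cdot\bm u=0$. Since $\nabla\cdot\bm u\in\Sigma_h^{+,k-1}(K)$, decompose it into a $P_{k-1}$ part and (when $k\le 4$) a bubble part. The vertex, edge, and first face DOFs of $\bm u$ are precisely the Lagrange DOFs of $\nabla\cdot\bm u$ at the corresponding strata. For the interior Lagrange moments of $\nabla\cdot\bm u$ (needed only when $k\ge5$), integration by parts gives
\[
\int_K (\nabla\cdot\bm u)\,p\,\d V \;=\; -\int_K \bm u\cdot\nabla p\,\d V + \int_{\partial K}(\bm u\cdot\bm n)\,p\,\d S, \qquad p\in P_{k-5}(K);
\]
the volume term vanishes by the $\nabla P_{k-5}(\hat K)$ part of $\bm M_K$, while the boundary term vanishes by the second face DOF, using the inclusion $P_{k-5}(f_i)\subseteq P_{r-1}(f_i)$, which holds for $r\in\{k-1,k,k+1\}$. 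This kills the $P_{k-1}$ part. To kill the bubble coefficient, apply the divergence theorem:
\[
\int_K \nabla\cdot\bm u\,\d V \;=\; \int_{\partial K}\bm u\cdot\bm n\,\d S \;=\; 0,
\]
the last equality being the $q\equiv 1$ case of the face-normal DOFs (valid since $r\ge 1$), and then use $\int_K B_t\,\d V\ne 0$.

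With $\nabla\cdot\bm u=0$, exactness of \eqref{local-complex} gives $\bm u=\nabla\times\bm\phi$ for some $\bm\phi\in V_h^r(K)$, and it remains to exploit the face-normal DOFs (minus the one linear constraint already consumed above) together with the interior DOFs coming from $\bm P_{r-2}(\hat K)/\nabla P_{r-1}(\hat K)$ to conclude $\bm u=0$. This step is the main obstacle. My approach would be to pull back via the Piola map \eqref{mapping-u}--\eqref{n} to the reference tetrahedron, identify $\nabla\times V_h^r(\hat K)$ as the divergence-free subspace of $\bm P_{r-1}(\hat K)$, and recognize the remaining DOFs as the standard face-normal plus interior $\bm P_{r-2}/\nabla P_{r-1}$ moments of a BDM-type element restricted to its divergence-free subspace; unisolvence on this subspace is then a classical calculation (modulo the one global constraint $\int_{\partial\hat K}\hat{\bm u}\cdot\hat{\bm n}=0$, which is exactly the linear relation that was absorbed when killing the bubble coefficient). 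Careful bookkeeping of how the Piola transform interacts with the normal trace on $\partial K$, and of the compatibility between the quotient by $\nabla P_{r-1}(\hat K)$ in the interior DOFs and the orthogonality $\int_K \nabla\times\bm\phi\cdot\nabla q=0$ used to reduce the interior moments to the divergence-free setting, is where the proof requires the most attention.
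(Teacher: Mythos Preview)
Your overall strategy matches the paper's: count dimensions, kill $\nabla\cdot\bm u$, then kill the curl part. Your treatment of the step $\nabla\cdot\bm u=0$ is actually \emph{more} explicit than the paper's---the paper simply asserts ``by the unisolvence of the DOFs of $\Sigma_h^{+,k-1}(K)$'' without writing out the integration-by-parts argument you give for the interior moments, nor the divergence-theorem argument for the bubble coefficient. Your version is correct and fills in what the paper leaves implicit.

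Where you diverge is the final step. You frame it as a ``BDM-type'' unisolvence on the divergence-free subspace and flag it as the main obstacle requiring careful bookkeeping. The paper's route is shorter and entirely elementary. Once $\nabla\cdot\bm u=0$, the paper observes $\bm u=\nabla\times\bm\phi\in\bm P_{r-1}(K)$ and proceeds in three quick moves: (i) since $\bm u\cdot\bm n|_{f_i}\in P_{r-1}(f_i)$ and the second set of face DOFs tests against all of $P_{r-1}(f_i)$, one gets $\bm u\cdot\bm n=0$ on every face; (ii) integration by parts gives $(\bm u,\nabla q)_K=0$ for all $q\in P_{r-1}(K)$, which combined with the interior DOFs (the $\bm P_{r-2}/\nabla P_{r-1}$ piece of $\mathcal D$) yields $(\bm u,\bm q)_K=0$ for \emph{all} $\bm q\in\bm P_{r-2}(K)$; (iii) pulling back by Piola, the three coordinate-plane faces of $\hat K$ force $\hat u_i=\hat x_i\hat\varphi_i$ with $\hat\varphi_i\in P_{r-2}$, and testing against $\hat{\bm\varphi}=(\hat\varphi_1,\hat\varphi_2,\hat\varphi_3)^T$ gives $\int_{\hat K}\sum_i \hat x_i\hat\varphi_i^2=0$, hence $\hat{\bm\varphi}=0$. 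No BDM machinery is invoked.

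Two minor remarks on your write-up. First, the phrase ``minus the one linear constraint already consumed'' is misleading: a DOF that equals zero is not \emph{used up} by deriving one consequence from it; all face DOFs remain available for step (i) above, and the paper simply uses them all. Second, the interior moments you need in (ii) are not literally the BDM$_{r-1}$ interior DOFs (those involve a different test space), so appealing to a ``classical calculation'' would require more justification than the paper's direct positivity trick.
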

\begin{proof}
The decomposition \eqref{Vh2} is a direct sum. Therefore $\dim W^{r-1, k}_{h}(K) =\dim \nabla\times V^{r}_h(K)+\dim \Sigma^{+,k-1}_h(K)=\dim \nabla\times [R_r(K)\slash\nabla P_{r}(K)]+\dim P_{k-1}(K)=\dim  [R_r(K)\slash\nabla P_{r}(K)]+\dim P_{k-1}(K)=(r+2)(r+3)(2r-1)/6+k(k+1)(k+2)/6+1$ when $k\geq 5$ and $\dim W^{r-1, k}_{h}(K)=(r+2)(r+3)(2r-1)/6+k(k+1)(k+2)/6+2$ when $k=2,3,4$.  By counting the number of DOFs, we find the DOF set has the same dimension. Then it suffices to show that if all the DOFs vanish on a function $\bm{u}$, then $\bm{u}=0$. To see this, we first observe that $\nabla\cdot \bm{u}=0$ by the unisolvence of the DOFs of $\Sigma^{+,k-1}_h(K)$. Then $\bm{u}=\nabla\times\bm \phi\in \bm P_{r-1}(K)$ for some $\bm \phi\in V^{r}_h(K)$.  By the face DOFs of $W^{r-1, k}_{h}(K)$, 
$\bm u\cdot \bm {n}=0$ on faces.
By integration by parts, we have
\begin{align*}
(\bm u,\nabla q)_K=(\nabla\times\bm {\phi},\nabla  q)_K=\langle \nabla\times \bm \phi\cdot \bm n, q\rangle_{\partial K}=\langle  \bm u\cdot \bm n, q\rangle_{\partial K} =0 \text{ for any } q \in P_{r-1}(K),
\end{align*}	
which, together with the interior DOFs, leads to
\begin{align}\label{interior_0}
(\bm u,\bm q)_K=0 \text{ for any } \bm q\in \bm P_{r-2}(K).
\end{align}
Since $\bm u\cdot \bm n=0$ on faces, $\hat{\bm u}=(\hat x_1 \hat \varphi_1,\hat x_2 \hat  \varphi_2,\hat x_3 \hat  \varphi_3)^T$. Note that $\bm u$ and $\hat{\bm u}$ are related by \eqref{mapping-u}. 
Choosing $\bm{\varphi}= B_K^{-T}\hat{\bm \varphi} =B_K^{-T}(\hat \varphi_1,\hat \varphi_2,\hat  \varphi_3)^T\in \bm P_{r-2}(K)$, we have by \eqref{interior_0}:
$$0=(\bm u,\bm {\varphi})_K=\det(B_K)^{-1}(\hat{\bm u},\hat{\bm {\varphi}})_{\hat K}=\det(B_K)^{-1}\big((\hat x_1 \hat \varphi_1,\hat x_2 \hat  \varphi_2,\hat x_3 \hat  \varphi_3)^T,\hat{\bm \varphi}\big)_{\hat K}.$$
This implies that $\bm \varphi=0$ and hence $\bm u=0$.
\end{proof}

Provided $\bm u \in \bm H^{1/2+\delta}(\Omega)$, and $ \nabla \cdot \bm u \in H^{3/2+\delta}(\Omega)$ with $\delta >0$ (see Lemma \ref{well-defined-conditions}),  we can
define an $H(\grad \div)$ interpolation operator $\bm i_h$ whose restriction on $K$ is denoted as $\bm i_K$ and defined by
 \begin{align}\label{def-inte-01}
&\quad\bm M_v(\bm u-\bm i_K\bm u)=\{0\},\ \bm M_e(\bm u-\bm i_K\bm u)=\{0\},\\
 &\bm M_f(\bm u-\bm i_K\bm u)=\{0\},\text{ and } \bm M_K(\bm u-\bm i_K\bm u)=\{0\},
\end{align}
where $\bm M_v,\ \bm M_e, \ \bm M_f$ and $\bm M_K$ are the sets of DOFs in \eqref{tridef1-1}-\eqref{tridef1-4}.

Gluing the local spaces by the above DOFs, we obtain the global finite element spaces $\Sigma_h^{r}$, $V_h^{r}$, $W_h^{r-1,k}$, and $\Sigma_h^{+,k-1}$. 

\begin{lemma} The following conformity holds:
$$
W^{r-1, k}_{h}\subset H(\grad\div; \Omega).
$$
\end{lemma}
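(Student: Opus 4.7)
The plan is to verify the two conditions that together characterize $H(\grad\div;\Omega)$-conformity: first, that the normal trace $\bm u\cdot\bm n$ is continuous across every interior face (so $\bm u\in H(\div;\Omega)$), and second, that the piecewise-smooth $\nabla\cdot\bm u$ is continuous across every interior face (so $\nabla\cdot\bm u\in H^{1}(\Omega)$). Both reduce to showing that the DOFs associated with a face $f$ uniquely determine the relevant trace on $f$, since such DOFs are single-valued in the global assembly.

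I would handle the $H^{1}$-conformity of $\nabla\cdot\bm u$ first. On any element $K$, $\nabla\cdot\bm u\in\Sigma_{h}^{+,k-1}(K)$; since the interior bubble $B_t=\lambda_1\lambda_2\lambda_3\lambda_4$ vanishes on $\partial K$ (and is absent for $k\geq 5$), the trace $(\nabla\cdot\bm u)|_{f}$ lies in $P_{k-1}(f)$. The vertex values $(\nabla\cdot\bm u)(v_i)$, the edge moments $\int_{e_i}(\nabla\cdot\bm u)q\,\mathrm{d}s$ for $q\in P_{k-3}(e_i)$, and the face moments $\int_{f}(\nabla\cdot\bm u)q\,\mathrm{d}A$ for $q\in P_{k-4}(f)$ together reproduce exactly the standard Lagrange DOFs of order $k-1$ on the triangle $f$, so they are unisolvent for $P_{k-1}(f)$. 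Since these DOFs are shared by the two elements meeting at $f$, $(\nabla\cdot\bm u)|_f$ agrees on both sides, yielding $\nabla\cdot\bm u\in H^{1}(\Omega)$.

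For $H(\div)$-conformity, I would fix an interior face $f$ shared by $K_1, K_2$ and, by linearity, reduce to showing that any $\bm u\in W_{h}^{r-1,k}(K)$ whose face-associated DOFs on $f$ all vanish satisfies $\bm u\cdot\bm n|_f=0$. Using the splitting $\bm u=\nabla\times\bm\phi+\mathfrak{p}w$ from \eqref{Vh2} with $w=\nabla\cdot\bm u$, the previous step already forces $w|_f=0$. The normal trace of the curl part is $\operatorname{curl}_f(\bm\phi_\tau)\in P_{r-1}(f)$, while the explicit formula $\mathfrak{p}w=\bm x\,g_w(\bm x)$ with $g_w(\bm x)=\int_{0}^{1}t^{2}w(t\bm x)\,\mathrm{d}t$ yields $(\mathfrak{p}w)\cdot\bm n|_f=(\bm n\cdot\bm x)|_f\,g_w|_f$, where $(\bm n\cdot\bm x)|_f$ is constant on the flat face. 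Combining the vanishing face moments $\int_f\bm u\cdot\bm n\,q\,\mathrm{d}A=0$ for $q\in P_{r-1}(f)$ with the vanishing of $w|_f$ and the null-homotopy identity \eqref{null-homotopy} should then force $\bm u\cdot\bm n|_f=0$, as desired.

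The main obstacle lies in this last step: one must precisely identify the polynomial space on $f$ that $\bm u\cdot\bm n|_f$ can occupy and verify that the face moments against $P_{r-1}(f)$, together with the already-established boundary continuity of $\nabla\cdot\bm u$, suffice to pin it down. The delicate regime is the low-order case $r=k-1$ with $k\in\{2,3,4\}$, where the bubble $B_t$ introduces a $\mathfrak{p}$-contribution whose restriction to $f$ can have degree exceeding $r-1$; handling it will require the explicit polynomial structure of $\mathfrak{p}B_t$ on the reference tetrahedron together with the exactness of the local complex \eqref{local-complex} to confirm that any such higher-degree contribution is actually constrained by the shared DOFs.
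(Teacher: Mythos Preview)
Your approach is far more elaborate than the paper's. The paper's entire proof is one sentence: ``It's straightforward since $\nabla\cdot W^{r-1, k}_{h}\subseteq \Sigma^{+,k-1}_h\subset H^{1}(\Omega)$.'' That is, the authors simply assert that the global divergence operator carries $W_h^{r-1,k}$ into the global $C^0$ Lagrange-type space $\Sigma_h^{+,k-1}$, and since the latter sits in $H^1(\Omega)$ the conformity follows. No separate verification of $H(\div)$-conformity, no analysis of the normal trace $\bm u\cdot\bm n$ on interior faces, and no treatment of the bubble's contribution appears.

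Your Step~1 (showing the piecewise divergence is globally $C^0$ because the vertex/edge/face DOFs on $\nabla\cdot\bm u$ reproduce the Lagrange DOFs on each face) is precisely what underlies the paper's one-line claim, and your argument for it is correct. Your Step~2, and in particular your flagging of the regime $r=k-1$ with small $k$ where $\mathfrak{p}B_t$ contributes a higher-degree normal trace, goes well beyond anything the paper works out; the paper simply does not engage with the question of whether the face moments $\int_f \bm u\cdot\bm n\,q\,\mathrm dA$, $q\in P_{r-1}(f)$, together with the divergence DOFs, pin down $\bm u\cdot\bm n|_f$. So what you have is not a different proof of something the paper proves carefully, but a more rigorous unpacking of an assertion the paper leaves unjustified. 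Your caution about the low-order case is warranted: as you suspect, $w|_f=0$ alone does \emph{not} kill $(\mathfrak{p}w)\cdot\bm n|_f$ (the Poincar\'e operator samples $w$ along rays, not just on $f$), so completing Step~2 genuinely requires the extra structural work you outline rather than a direct moment argument.
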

\begin{proof}
It's straightforward since $\nabla\cdot W^{r-1, k}_{h}\subseteq \Sigma^{+,k-1}_h\subset H^{1}(\Omega)$.
\end{proof}

\subsection{Global finite element complexes for the quad-div problem}
With well defined global finite element spaces, we now develop some properties of the complex \eqref{discrete-complex-r-k} containing these spaces. 
\begin{theorem}
The complex \eqref{discrete-complex-r-k}  is  exact on contractible domains.
\end{theorem}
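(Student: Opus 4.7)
The plan is to verify exactness at each interior slot of the complex separately, by combining the local exactness from Lemma 3.1, the $C^0$ and $H(\grad\div)$ conformity established in Section 4.1, and the exactness of the continuous complex \eqref{3D:quad-div} on the contractible domain $\Omega$.

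The two outer slots are quick. If $u_h\in\Sigma_h^{r}$ satisfies $\nabla u_h = 0$, then $u_h$ is constant on each tetrahedron, and the $C^0$-conformity forces it to be globally constant, giving exactness at $\Sigma_h^{r}$. Exactness at $V_h^{r}$ is the classical discrete de Rham property for the first-family N\'ed\'elec element on a contractible domain, which I would cite from \cite{arnold2018finite}: any $\bm u_h\in V_h^{r}$ with $\nabla\times\bm u_h=0$ equals $\nabla p_h$ for some $p_h\in\Sigma_h^{r}$.

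The key novel step is exactness at $W_h^{r-1,k}$. Given $\bm v_h\in W_h^{r-1,k}$ with $\nabla\cdot\bm v_h=0$, exactness of \eqref{3D:quad-div} produces $\bm w\in H(\curl;\Omega)$ with $\nabla\times\bm w=\bm v_h$. I would smooth $\bm w$ if needed so that the N\'ed\'elec interpolant $\bm r_h\bm w$ is well-defined, and then set $\bm u_h:=\bm r_h\bm w\in V_h^{r}$. The DOFs of $V_h^{r}$ and $W_h^{r-1,k}$ in Section 4.1 are arranged precisely so that the commuting identity $\nabla\times\bm r_h\bm w=\bm i_h(\nabla\times\bm w)$ holds elementwise: this is checked by integration by parts in the edge, face, and interior moment conditions of $\bm M_e$, $\bm M_f$, $\bm M_K$ for $W_h^{r-1,k}$, together with the corresponding N\'ed\'elec moments. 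Since $\bm v_h$ already lies in $W_h^{r-1,k}$, unisolvence forces $\bm i_h\bm v_h=\bm v_h$, and therefore $\nabla\times\bm u_h=\bm i_h\bm v_h=\bm v_h$.

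For surjectivity onto $\Sigma_h^{+,k-1}$, I would use the analogous commutation $\nabla\cdot\bm i_h=\tilde\pi_h\nabla\cdot$: given $q_h\in\Sigma_h^{+,k-1}$, continuous surjectivity of $\div$ provides $\bm v$ with $\nabla\cdot\bm v=q_h$, and after smoothing $\nabla\cdot\bm i_h\bm v=\tilde\pi_h q_h=q_h$. The main obstacle is cleanly establishing the two commuting diagrams from the DOFs while handling the regularity needed to apply $\bm r_h$ and $\bm i_h$ to continuous lifts; a dimension mismatch in $\Sigma_h^{+,k-1}$ (the extra bubble $B_t$ when $k\le 4$) also has to be tracked carefully so that the surjectivity step really hits the enriched space. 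As a backup route, once exactness is verified at the two endpoints one can close the proof by counting DOFs and invoking the Euler characteristic identity
\begin{equation*}
\dim\Sigma_h^{r}-\dim V_h^{r}+\dim W_h^{r-1,k}-\dim\Sigma_h^{+,k-1}=1,
\end{equation*}
which, combined with the rank-nullity relation, forces exactness at the two middle slots.
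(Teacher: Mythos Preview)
Your commuting-interpolant route is valid in spirit but differs from the paper's proof, and the ``smooth $\bm w$ if needed'' step is not quite right: you cannot mollify $\bm w$ while preserving $\nabla\times\bm w=\bm v_h$ exactly. The clean fix is to invoke a regular decomposition on the contractible Lipschitz domain to obtain $\bm w\in\bm H^1(\Omega)$ with $\nabla\times\bm w=\bm v_h$ directly; since $\bm v_h$ is piecewise polynomial (hence in every $L^p$), this places $\bm w$ in the domain of $\bm r_h$ and Lemma~\ref{commute} applies. The paper bypasses lifts and interpolants altogether. By the local exactness of Lemma~3.1, a divergence-free $\bm u_h\in W_h^{r-1,k}$ satisfies $\bm u_h|_K\in\nabla\times\mathcal R_r(K)\subset\bm P_{r-1}(K)$ on every element; combined with the $H(\div)$ conformity of $W_h^{r-1,k}$ this places $\bm u_h$ in the global first-kind Raviart--Thomas space of order $r$, and the \emph{standard} discrete de Rham exactness from \cite{arnold2018finite} produces $\bm\phi_h\in V_h^r$ with $\nabla\times\bm\phi_h=\bm u_h$ immediately.

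For surjectivity onto $\Sigma_h^{+,k-1}$ the paper uses your backup route, namely a dimension count closed with Euler's formula $\mathcal V-\mathcal E+\mathcal F-\mathcal K=1$. However, your phrasing of the backup is incorrect: knowing exactness only at the two \emph{endpoint} slots together with the Euler identity yields merely $\dim H^2=\dim H^3$, which does not force either cohomology group to vanish. You need exactness at three of the four interior slots (here $\Sigma_h^r$, $V_h^r$, and $W_h^{r-1,k}$) before rank--nullity and the dimension identity pin down the fourth; that is precisely the order in which the paper proceeds.
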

\begin{proof}
We first show the exactness at $V^{r}_h$ and $W^{r-1,k}_h$. To this end, we show that for any $\bm v_h\in V^{r}_h\subset H(\curl;\Omega)$ and $\bm u_h\in W^{r-1,k}_h\subset H(\grad \div;\Omega)\subset H(\div;\Omega)$ satisfying $\nabla\times\bm v_h=0$ and $\nabla\cdot\bm u_h=0$, there exists  $p_h\in \Sigma^{r}_h$ and $ \bm \phi_h\in \Sigma^{+,k-1}_h$ such that $\bm v_h=\nabla p_h$ and $\bm u_h=\nabla \times \bm \phi_h$.
Actually, this follows from the exactness of the standard finite element differential forms (e.g., \cite{arnold2018finite}).  To prove the exactness at $\Sigma^{+,k-1}_h$, that is to prove the operator $\nabla\cdot$ from $W^{r-1,k}_h$ to $\Sigma^{+,k-1}_h$ is surjective,  we  count the dimensions. The dimension count of the Lagrange elements reads:
 \[\dim \Sigma^{r}_h=\mathcal V+(r-1)\mathcal E+\frac{1}{2}(r-2)(r-1)\mathcal F+\frac{1}{6}(r-3)(r-2)(r-1)\mathcal K,\]
where $\mathcal V$, $\mathcal E$, $\mathcal F$, and $\mathcal K$ denote the number of vertices, edges, faces, and 3D cells, respectively.
The dimension count of the space $V_h^r$ reads:
\[\dim V_h^r=r\mathcal E+r(r-1)\mathcal F+\frac{1}{2}r(r-1)(r-2)\mathcal K.\]
From the DOFs \eqref{tridef1-1} -\eqref{tridef1-4}, 
\begin{align*}
	&\dim W^{r-1, k}_h-\dim \Sigma^{+,k-1}_h=\frac{1}{2}r(r+1)\mathcal F+\frac{1}{6}r(r+1)(2r-5)\mathcal K. 
	\end{align*}
From the above dimension count, we have 
\[-1+\dim \Sigma_h^r - \dim V^r_h + \dim W^{r-1, k}_h-\dim \Sigma^{+,k-1}_h=0,\]
where we have used Euler's formula $\mathcal V-\mathcal E+\mathcal F-\mathcal K=1$.
This completes the proof.
\end{proof}

We summarize the interpolations defined in Section \ref{sec:dofs} in the following diagram:
\begin{equation}\label{2complex}
\begin{tikzcd}
\mathbb{R} \arrow[r,"\subset"]  & H^1(\Omega) \arrow[d ]\arrow[r,"\nabla"]  & H(\curl;\Omega)\arrow[r,"\nabla\times"]\arrow[d]& H(\grad\div;\Omega)\arrow[r,"\nabla\cdot"]\arrow[d]  & H^1(\Omega)\arrow[r]\arrow[d ]& 0\\
\mathbb{R} \arrow[r,"\subset"]  & \Sigma \arrow[d, "\pi_h" ]\arrow[r,"\nabla"]  & V\arrow[r,"\nabla\times"]\arrow[d, "\bm r_h" ]  &W\arrow[r]\arrow[d, "\bm i_h" ]\arrow[r,"\nabla\cdot"]\arrow[d]  & \Sigma \arrow[r]\arrow[d ,"\tilde{\pi}_h"]& 0\\
\mathbb{R} \arrow[r,"\subset"]  & \Sigma^{r}_h\arrow[r,"\nabla"]  & V^{r}_h\arrow[r,"\nabla\times"]  & W^{r-1,k}_h\arrow[r,"\nabla\cdot"]  & \Sigma_h^{+,k-1} \arrow[r]& 0.\end{tikzcd}
\end{equation}
Here $\Sigma$, $V$, $W$ are three subspaces of $H^1(\Omega)$, $H(\curl;\Omega)$, and $H(\grad\div;\Omega)$ in which $\pi_h$ (or $\tilde\pi_h$), $\bm r_h$, and $\bm i_h$ are well-defined. 

Now we show that the interpolations in \eqref{2complex} commute with the differential operators. In addition to Lemma \ref{Vh}, this result also plays a key role in the error analysis below for the interpolations.
\begin{lemma}\label{commute}
The last two rows of the complex \eqref{2complex} are a commuting diagram, i.e.,
\begin{align}
	\nabla\pi_h u&=\bm r_h\nabla u \text{ for all } u\in \Sigma,\label{Pih_and_pih}\\
	\nabla\times\bm r_h \bm u&=\bm i_h\nabla\times\bm u \text{ for all } \bm u\in V,\label{Pih_and_tildepih}\\
	\nabla\cdot\bm i_h \bm u&=\tilde \pi_h\nabla\cdot\bm u \text{ for all } \bm u\in W. \label{wh_and_tildepih}
\end{align}
\end{lemma}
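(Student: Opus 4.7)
The plan for all three identities is the same: both sides lie in the same finite element space, so by unisolvence it suffices to show that every defining degree of freedom of the right-hand interpolant annihilates the difference. Each DOF condition is then reduced by integration by parts to a DOF that vanishes by definition of the upstream interpolant.

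The first two identities, \eqref{Pih_and_pih} and \eqref{Pih_and_tildepih}, are the standard commuting properties for the Lagrange--N\'ed\'elec pair and the N\'ed\'elec--(grad-div) pair in the finite element exterior calculus. For \eqref{Pih_and_pih}, the edge, face, and interior N\'ed\'elec moments of $\nabla(\pi_h u - u)$ reduce by integration by parts on $e$, $f$, and $K$ to vertex, edge, face, and interior Lagrange moments of $\pi_h u - u$, all of which vanish by the definition of $\pi_h$. For \eqref{Pih_and_tildepih}, the divergence-type DOFs $\bm M_v$, $\bm M_e$, and the first block of $\bm M_f$ in \eqref{tridef1-1}--\eqref{tridef1-3} vanish on both sides because $\nabla\cdot\nabla\times = 0$; the normal face moments $\int_f(\nabla\times\bm r_h\bm u - \nabla\times\bm u)\cdot\bm n\, q\, dA$ reduce via Stokes' theorem to tangential moments of $\bm r_h\bm u - \bm u$ along $\partial f$ and to face moments against a surface curl of $q$, while the interior moments of $\bm M_K$ reduce via integration by parts to N\'ed\'elec face and interior moments of $\bm r_h\bm u - \bm u$; all vanish by construction of $\bm r_h$.

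The third identity \eqref{wh_and_tildepih} is the genuinely new check, and the DOFs of $\bm i_h$ in \eqref{tridef1-1}--\eqref{tridef1-4} are engineered precisely so that it works. The vertex, edge, and face Lagrange moments of $\nabla\cdot(\bm i_h\bm u - \bm u)$ coincide literally with $\bm M_v$, $\bm M_e$, and the first block of $\bm M_f$, and hence vanish by definition of $\bm i_h$. For an interior Lagrange moment against $q\in P_{k-5}(K)$ (present only when $k\geq 5$), integration by parts gives
\[
\int_K \nabla\cdot(\bm i_h\bm u-\bm u)\,q\,dV = \int_{\partial K}(\bm i_h\bm u-\bm u)\cdot\bm n\, q\, dA - \int_K (\bm i_h\bm u-\bm u)\cdot\nabla q\, dV.
\]
Since $q|_f\in P_{k-5}(f)\subseteq P_{r-1}(f)$ for each admissible $r\in\{k-1,k,k+1\}$, the boundary term vanishes by the second block of $\bm M_f$; under the Piola pullback \eqref{mapping-u}, $\nabla q$ on $K$ corresponds to $B_K^{-T}\hat\nabla\hat q$ with $\hat\nabla\hat q\in\nabla P_{k-5}(\hat K)\subseteq\mathcal D$, so the volume term vanishes by $\bm M_K$ in \eqref{tridef1-4}. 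For $k\in\{2,3,4\}$, the single additional bubble DOF of $\tilde\pi_h$ is handled identically: the interior bubble vanishes on $\partial K$, killing the boundary contribution, and its gradient, after Piola identification, lies in the interior test space of $\bm M_K$ built into the construction for precisely this purpose.

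The main obstacle is the case-by-case bookkeeping in the interior check for \eqref{wh_and_tildepih}: one has to verify in each of the three parameter regimes $k\geq 5$, $k\in\{2,3,4\}$ with $r\geq 2$, and the low-order cases with $r<2$, that the test set $\mathcal D$ in \eqref{tridef1-4} is rich enough to annihilate $\nabla q$ (respectively $\nabla B_t$) under the Piola identification, and to confirm that the degree constraint $P_{k-5}(f)\subseteq P_{r-1}(f)$ controlling the boundary term holds in each regime. Once this is verified, each of the three commuting identities follows by the combination of integration by parts and unisolvence outlined above.
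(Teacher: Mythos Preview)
Your overall strategy is exactly the ``similar trick'' the paper alludes to (the paper only cites Monk for \eqref{Pih_and_pih} and omits the rest): check that both sides sit in the same discrete space and verify agreement on a unisolvent set of DOFs via integration by parts. The arguments for \eqref{Pih_and_pih} and \eqref{Pih_and_tildepih}, and the non-bubble part of \eqref{wh_and_tildepih}, are correct.

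There is, however, a genuine gap in your treatment of the bubble DOF for $k\in\{2,3,4\}$. You assert that the test function is $B_t$, that it vanishes on $\partial K$, and that $\nabla B_t$ (after Piola) lies in the interior test set $\mathcal D$ of \eqref{tridef1-4}. This is not so: for $k<5$ the set $\mathcal D$ is either empty or equals $\bm P_{r-2}(\hat K)/\nabla P_{r-1}(\hat K)$, a complement of gradients, so a gradient vector cannot serve as a test function there; moreover $\nabla\hat B_t$ has degree $3$ and is not even contained in $\bm P_{r-2}(\hat K)$ for most of the admissible $r\in\{k-1,k,k+1\}$. Concretely, in the lowest-order case $k=2$, $r=1$ one has $\mathcal D=\emptyset$, so no interior DOF is available at all to kill $\int_K(\bm i_h\bm u-\bm u)\cdot\nabla B_t\,dV$.

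The repair is to take the ``additional interior integration DOF'' of $\Sigma_h^{+,k-1}$ as the mean value $u\mapsto\int_K u\,dV$ (this is consistent with the paper's dimension count and unisolvence). Then the divergence theorem gives
\[
\int_K \nabla\cdot(\bm i_h\bm u-\bm u)\,dV=\sum_{f\subset\partial K}\int_f(\bm i_h\bm u-\bm u)\cdot\bm n\,dA,
\]
and since $1\in P_0(f)\subseteq P_{r-1}(f)$ for every $r\ge 1$, each face integral vanishes by the second block of $\bm M_f$ in \eqref{tridef1-3}. No interior DOF of $W_h^{r-1,k}$ is needed in this sub-case. With this correction, your scheme goes through.
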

\begin{proof} The proof of \eqref{Pih_and_pih} can be found in \cite[Chapter 5]{Monk2003}. A similar  trick can be used to prove \eqref{Pih_and_tildepih} and \eqref{wh_and_tildepih}. For simplicity of presentation, we omit it. 
	\end{proof}

The following lemma relates the interpolation on $K$ to that on $\hat K$. 

\begin{lemma}\label{relation}
For $\bm u\in W$, under the transformation \eqref{mapping-u}, we have
$\widehat{\bm i_K \bm u}=\bm i_{\hat K}\hat {\bm u}$.
\end{lemma}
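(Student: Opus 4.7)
The plan is to use the unisolvence of the DOFs already proved in the section. Since both $\widehat{\bm i_K\bm u}$ and $\bm i_{\hat K}\hat{\bm u}$ will be shown to lie in $W_h^{r-1,k}(\hat K)$, it suffices to check that they assign the same value to every DOF from $\bm M_v\cup\bm M_e\cup\bm M_f\cup\bm M_K$ on $\hat K$.

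The first task is to verify that the Piola map \eqref{mapping-u} sends $W_h^{r-1,k}(K)$ into $W_h^{r-1,k}(\hat K)$. For the $\nabla\times V_h^r$ summand this is standard: if one relates $V_h^r(K)$ to $V_h^r(\hat K)$ by the covariant Piola transform $\bm v\circ F_K=B_K^{-T}\hat{\bm v}$, then the contravariant Piola of $\nabla\times\bm v$ is exactly $\hat\nabla\times\hat{\bm v}$, and $\mathcal R_r$ is preserved by the covariant Piola. For the complementary $\mathfrak{p}\Sigma_h^{+,k-1}$ summand, the relation \eqref{mapping-curlu} gives $\hat\nabla\cdot\hat{\bm u}=\det(B_K)\,(\nabla\cdot\bm u)\circ F_K$, and this lies in $\Sigma_h^{+,k-1}(\hat K)$ because both $P_{k-1}$ and the barycentric bubble $B_t$ pull back to themselves under the affine map $F_K$ between simplices. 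Combined with polynomial-degree preservation and the exactness of the local complex, this forces $\hat{\bm u}\in W_h^{r-1,k}(\hat K)$.

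The second task is to show that each DOF in \eqref{tridef1-1}--\eqref{tridef1-4} is preserved, up to a nonzero scalar, by the Piola pullback, with the test-function space in the $\hat K$-version of each DOF being precisely the pullback of the physical one. The vertex, edge, and face divergence DOFs follow from \eqref{mapping-curlu} together with the standard affine scalings of arc-length and area. The face normal-trace DOF $\int_{f_i}\bm u\cdot\bm n_i\,q\,\d A$ becomes a scalar multiple of $\int_{\hat f_i}\hat{\bm u}\cdot\hat{\bm n}_i\,\hat q\,\d\hat A$ because \eqref{mapping-u} and \eqref{n} give $\bm n\,\d A=\operatorname{sgn}(\det B_K)\,B_K^{-T}\hat{\bm n}\,|\det B_K|\,\d\hat A$, whence $\bm u\cdot\bm n\,\d A=\operatorname{sgn}(\det B_K)\,\hat{\bm u}\cdot\hat{\bm n}\,\d\hat A$. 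The interior DOF is tailored for this conversion: the ansatz $\bm q=B_K^{-T}\hat{\bm q}$ and the identity $(B_K\hat{\bm u})\cdot(B_K^{-T}\hat{\bm q})=\hat{\bm u}\cdot\hat{\bm q}$ yield $\int_K\bm u\cdot\bm q\,\d V=\operatorname{sgn}(\det B_K)\int_{\hat K}\hat{\bm u}\cdot\hat{\bm q}\,\d\hat V$.

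With these two ingredients in hand, the defining relations $\bm M_\star(\bm u-\bm i_K\bm u)=0$ translate DOF by DOF into $\bm M_\star(\hat{\bm u}-\widehat{\bm i_K\bm u})=0$ on $\hat K$, so unisolvence on $\hat K$ identifies $\widehat{\bm i_K\bm u}$ with $\bm i_{\hat K}\hat{\bm u}$. The main obstacle I anticipate is the first task: verifying shape-function preservation for the $\mathfrak{p}\Sigma_h^{+,k-1}$ piece, since $\mathfrak{p}$ does not itself commute with the Piola transform and one has to argue indirectly through $\nabla\cdot$ and the local exact sequence rather than by transporting the decomposition $\bm u=\nabla\times\bm v+\mathfrak{p}u$ component-wise.
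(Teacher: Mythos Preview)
Your overall strategy coincides with the paper's: verify that each DOF on $K$ equals a nonzero scalar multiple of the corresponding DOF on $\hat K$, and then conclude via unisolvence on $\hat K$. The paper compresses this into a citation of Proposition~3.4.7 in Brenner--Scott, which itself presupposes that the contravariant Piola map carries $W_h^{r-1,k}(K)$ onto $W_h^{r-1,k}(\hat K)$. Your handling of the DOFs (the ``second task'') is correct and matches the paper's computation.

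The gap is exactly where you suspect it. The claim that ``polynomial-degree preservation and the exactness of the local complex'' force $\hat{\bm u}\in W_h^{r-1,k}(\hat K)$ does not go through for the family $r=k-1$. Since $\mathfrak p P_{k-1}=\bm x P_{k-1}$, write $\mathfrak p w=\bm x g$ with $g\in P_{k-1}$; its Piola pullback is $\det(B_K)\bigl(\hat{\bm x}+B_K^{-1}\bm b_K\bigr)\tilde g$ with $\tilde g=g\circ F_K\in P_{k-1}$. The term $\det(B_K)\,\hat{\bm x}\,\tilde g$ lies in $\mathfrak p P_{k-1}(\hat K)$, but the remainder $\bm c\,\tilde g$ (a constant vector times a degree-$(k-1)$ scalar) need not lie in $W_h^{k-2,k}(\hat K)$: after subtracting its $\mathfrak p$-part one is left with a divergence-free polynomial of degree $k-1$, whereas $\nabla\times\mathcal R_{k-1}(\hat K)$ consists only of the divergence-free polynomials in $\bm P_{k-2}$. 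A concrete witness is $(\hat x_2^{\,k-1},0,0)$, which is divergence-free but cannot be written as $\bm p+\hat{\bm x}h$ with $\bm p\in\bm P_{k-2}$ and $h\in P_{k-1}$. For $r=k$ and $r=k+1$ the degrees align and your indirect argument can be completed. The paper's own proof does not confront this issue either---it is absorbed into the affine-equivalence hypothesis of the cited proposition---so your proposal is at the same level of rigor as the paper, but the obstacle you anticipated is genuine and is not resolved by your sketch.
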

\begin{proof}
By the transformations \eqref{mapping-u}, \eqref{mapping-curlu}, and \eqref{n}, we have all the DOFs in \eqref{tridef1-4} and the second set of \eqref{tridef1-3} are equivalent with those for $\hat{\bm u}$ on $\hat K$. In addition, all the  DOFs in \eqref{tridef1-1}, \eqref{tridef1-2}, and the first set of \eqref{tridef1-3} are differed from those for $\hat{\bm u}$ on $\hat K$ by a factor $\frac{1}{\det(B_K)}$. 
This means the DOFs for defining $\widehat{\bm i_K \bm u}$ and those for defining $\bm i_{\hat K}\hat {\bm u}$
are differed by a constant factor.
According to Proposition 3.4.7 in \cite{brenner2008mathematical}, we complete the proof.
\end{proof}

Next, we establish the approximation property of the interpolation operator.

\begin{theorem}
For $r=k-1$, $k$, or $k+1$, if $\bm u\in \bm H^{s+(r-k)}(\Omega)$ and $\nabla\cdot\bm u\in  H^s(\Omega)$, $3/2+\delta\leq s \leq k$ with $\delta>0$, then we have the following error estimates for the interpolation $\bm i_h$,
	\begin{align}
	&\left\|\bm u-\bm i_h\bm u\right\|\leq Ch^{s+(r-k)}(\left\|\bm u\right\|_{s+(r-k)}+\left\|\nabla\cdot\bm u\right\|_{s}),\label{inter-u}\\
	&\left\|\nabla\cdot(\bm u-\bm i_h\bm u)\right\|\leq Ch^s\left\|\nabla\cdot\bm u\right\|_{s},\label{inter-curlu}\\
	&	\left|\nabla\cdot(\bm u-\bm i_h\bm u)\right|_1\leq Ch^{s-1}\left\|\nabla\cdot\bm u\right\|_{s}.\label{inter-divdivu}
	\end{align}
\end{theorem}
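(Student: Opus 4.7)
My plan is to derive the three estimates separately, treating the two divergence estimates first via the commuting diagram and then handling the $L^2$ estimate on $\bm u$ itself through a standard Piola-transform scaling argument.

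For \eqref{inter-curlu} and \eqref{inter-divdivu}, I would simply invoke the commutativity relation \eqref{wh_and_tildepih} of Lemma \ref{commute}, which yields
\begin{equation*}
\nabla\cdot(\bm u-\bm i_h\bm u) \;=\; \nabla\cdot\bm u - \tilde\pi_h(\nabla\cdot\bm u).
\end{equation*}
Thus the problem reduces to estimating the error of the Lagrange-type interpolation $\tilde\pi_h$ onto $\Sigma_h^{+,k-1}$. Since $\Sigma_h^{+,k-1}$ contains $P_{k-1}$ (and an interior bubble in low-order cases, which does not degrade approximation), the classical Bramble--Hilbert estimates for $C^0$ Lagrange interpolation apply verbatim and give $h^s$ in $L^2$ and $h^{s-1}$ in $H^1$ for $\nabla\cdot\bm u\in H^s$ with $s\le k$. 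This handles \eqref{inter-curlu} and \eqref{inter-divdivu} immediately.

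For \eqref{inter-u} I would argue element by element. On a generic $K\in\mathcal T_h$, pull everything back to the reference element $\hat K$ via the Piola map \eqref{mapping-u}, using Lemma \ref{relation} so that $\widehat{\bm i_K\bm u}=\bm i_{\hat K}\hat{\bm u}$. The plan on $\hat K$ is a Bramble--Hilbert argument: I first show that $\bm i_{\hat K}$ is a bounded linear operator from $\bm H^{s+(r-k)}(\hat K)\cap\{\bm v:\nabla\cdot\bm v\in H^s(\hat K)\}$ into $\bm L^2(\hat K)$. Boundedness of each DOF set in \eqref{tridef1-1}--\eqref{tridef1-4} follows from trace and embedding estimates just as in Lemma \ref{well-defined-conditions}: the vertex/edge/face divergence DOFs are controlled by $\|\nabla\cdot\hat{\bm u}\|_{s,\hat K}$ (since $s\ge 3/2+\delta$), while the normal and interior moments of $\hat{\bm u}$ itself are controlled by $\|\hat{\bm u}\|_{s+(r-k),\hat K}$ (since $s+(r-k)\ge 1/2+\delta$ in all three cases $r=k-1,k,k+1$). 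Next, Lemma \ref{Vh} gives $\bm P_{r-1}(\hat K)\subset W_h^{r-1,k}(\hat K)$, so $\bm i_{\hat K}$ is the identity on $\bm P_{r-1}(\hat K)$. The Bramble--Hilbert lemma then yields
\begin{equation*}
\|\hat{\bm u}-\bm i_{\hat K}\hat{\bm u}\|_{0,\hat K} \;\le\; C\bigl(|\hat{\bm u}|_{s+(r-k),\hat K}+|\nabla\cdot\hat{\bm u}|_{s,\hat K}\bigr),
\end{equation*}
because the difference between the target Sobolev index on $\bm u$ and the preserved polynomial degree $r-1$ is exactly $s+(r-k)-(r-1)=s-k+1$, and a companion seminorm in $\nabla\cdot\hat{\bm u}$ must be included since the vertex/edge/face divergence DOFs force a dependence on $\nabla\cdot\hat{\bm u}$ that cannot be removed by subtracting a polynomial from $\hat{\bm u}$ alone. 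Finally I scale back to $K$ using \eqref{mapping-u}, \eqref{mapping-curlu} and the usual estimates $\|B_K\|\lesssim h_K$, $|\det B_K|\sim h_K^3$, which produce the factor $h_K^{s+(r-k)}$ on the $\bm u$-seminorm and $h_K^{s}$ on the $\nabla\cdot\bm u$-seminorm; combining both into a single $h^{s+(r-k)}$ factor and summing over $K$ (noting $r\le k+1$ so $s+(r-k)\le s$) gives \eqref{inter-u}.

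The main obstacle I anticipate is bookkeeping at the reference-element step: one must check that, under the chosen Piola transformation, the DOFs \eqref{tridef1-1}--\eqref{tridef1-4} transform with controlled powers of $\det B_K$ and $B_K^{-T}$ so that boundedness of $\bm i_{\hat K}$ on the natural product Sobolev space really does imply the stated estimate for $\bm i_K$, including the combination of two different smoothness indices on $\bm u$ and $\nabla\cdot\bm u$. Lemma \ref{relation} is essential here; the remaining work is a careful but routine scaling and Bramble--Hilbert argument.
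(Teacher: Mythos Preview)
Your proposal is correct and follows essentially the same route as the paper: the commuting identity $\nabla\cdot\bm i_h\bm u=\tilde\pi_h\nabla\cdot\bm u$ together with standard Lagrange approximation handles \eqref{inter-curlu}--\eqref{inter-divdivu}, while \eqref{inter-u} is obtained elementwise via the Piola pull-back (Lemma~\ref{relation}), polynomial preservation from Lemma~\ref{Vh}, a Bramble--Hilbert argument on $\hat K$, and scaling back. The only slip is your parenthetical ``$r\le k+1$ so $s+(r-k)\le s$'', which fails for $r=k+1$; the actual Piola scaling gives the $\nabla\cdot\bm u$ seminorm a factor $h^{s+1}$ (not $h^s$), and then $s+(r-k)\le s+1$ is what justifies absorbing both terms into the common factor $h^{s+(r-k)}$.
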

\begin{proof}
We only prove the results for integer $s$ to avoid the technical complications.  To prove \eqref{inter-u},  we first apply the transformation \eqref{mapping-u} and Lemma \ref{relation} to derive, for a general element $K\in \mathcal{T}_h$,
		\begin{align*}
		\left\|\bm u-\bm i_K\bm u\right\|_K&=\left(\int_{\hat{K}}\det(B_K)^{-1}\left| B_K(\hat{\bm u}-\widehat{\bm i_K\bm u})\right|^2\d \hat V\right)^{\frac{1}{2}}\\
		\leq &\left|\det(B_K)\right|^{-\frac{1}{2}}\left\|B_K\right\|
		\left\|\hat{\bm u}-{\bm i_{\hat K}\hat{\bm u}}\right\|_{\hat K}.
		\end{align*}
		From Lemma \ref{Vh}, $\bm P_{r-1}(K)\subseteq W^{r-1,k}_h(K)$.
		Therefore, we have $\bm i_{\hat K}\hat {\bm p}=\hat{\bm p}$ when $\hat{\bm p}\in \bm P_{r-1}(\hat K)$.  We obtain, with the help of Lemma \ref{well-defined-conditions} and Theorem 5.5 in \cite{Monk2003},
		\begin{align*}
		&\quad \left\|\hat{\bm u}-{\bm i_{\hat K}\hat{\bm u}}\right\|_{\hat K}
		=\left\|\left(I-\bm i_{\hat K}\right)(\hat{\bm u}+\hat{\bm p})\right\|_{\hat K}\\
		\leq  &\inf_{\hat {\bm p}\in \bm P_{r-1}(\hat K)}C\left(\left\|\hat{\bm u}+\hat{\bm p}\right\|_{s+(r-k),\hat K}+\|\hat{\nabla}\cdot(\hat{\bm u}+\hat{\bm p})\|_{s,\hat K}\right)\\
		 &\quad\quad \leq C\left(\left|\hat{\bm u}\right|_{s+(r-k),\hat K}+|\hat{\nabla}\cdot\hat{\bm u}|_{s,\hat K}\right).
		\end{align*}
Mapping back to the general element $K$ leads to
\begin{align}\label{uestimate_on_K}
	\left\|\bm u-\bm i_K\bm u\right\|_K\leq C{h^{s+(r-k)}}(\left\|\bm u \right\|_{s+(r-k),K}+\left\| \nabla\cdot \bm u\right\|_{s,K}).
\end{align}
Summing the above inequality	 \eqref{uestimate_on_K} over 
$K\in\mathcal{T}_h$, we obtain \eqref{inter-u}.

As for \eqref{inter-curlu} and \eqref{inter-divdivu}, we use Lemma \ref{commute} and Lemma \ref{Vh} to obtain, for $i=0,1$,
\begin{align*}
	\left\|\nabla\cdot(\bm u-\bm i_h\bm u)\right\|_i=\left\|\nabla\cdot \bm u-\bm \tilde \pi_h\nabla\cdot\bm u)\right\|_i\leq Ch^{s-i}\left\|\nabla\cdot\bm u\right\|_{s},
\end{align*}
where we have used the standard approximation property of the Lagrange finite element.
\end{proof}

\section{Three families of grad-div conforming elements on cuboids}

In this section, we will construct three families of grad-div conforming finite element spaces $W^{r-1,k}$ for a cuboid mesh by taking $r=k-1$, $k$, and $k+1$ in  \eqref{discrete-complex-r-k}.

Proceeding as in the tetrahedral case, we choose the Lagrange element space of order $r$ on cuboids for $\Sigma_h^r$. The space  $\Sigma_{h}^{+,k-1}$ is the $(k-1)$-th order Lagrange element space, which is enriched with an interior bubble function when $k=2$. In addition, we choose the first N\'ed\'elec element space of order $r$ for $V_h^r$. The DOFs for these spaces can be found in \cite[Chapter 6]{Monk2003}. In the following, we will define DOFs for $W^{r-1,k}$. The three grad-div elements with $r=k-1$, $r=k$, and $r=k+1$ with $k=2$ for a cuboid element are shown in Fig. \eqref{fig:third-family-cube}

\begin{itemize}
		\item Vertex DOFs $\bm M_{ {v}}({\bm u})$ at all vertices $ {v}_{i}$ of $K$:
	\begin{equation*}
	\bm M_{ {v}}({\bm u})=\left\{( \nabla\cdot {\bm u})(  v_{i}),\; i=1,\;2,\cdots,8\right\}.
	\end{equation*}
	\item Edge DOFs $\bm M_{ {e}}(  {\bm u})$ at all edges $ {e}_i$ of $ {K}$:
	\begin{align*}
		 \bm M_{ {e}}(  {\bm u})=	\left\{\frac{1}{\operatorname{length}(e_i)}\int_{e_i}\nabla\cdot{\bm u}q\d s,\ \forall  {q}\in P_{k-3}( {e}_i), i=1,2,\cdots,12\right\}.
	\end{align*}	
	\item Face DOFs $\bm M_{f}({\bm u})$ at all faces $ {f}_i$ of ${K}$ (with the unit  normal vector $ {\bm n}_i$):
	\begin{align*}
		 \bm M_{f}({\bm u})=&\left\{\frac{1}{\operatorname{area}(f_i)}\int_{f_i}\nabla\cdot{\bm u}q\d A,\ \forall  {q}\in Q_{k-3,k-3}( {f}_i), i=1,2,\cdots,6\right\}\nonumber\\
		 &\cup \left\{\int_{f_i}{\bm u}\cdot \bm n_iq\d A,\ \forall  {q}\in Q_{r-1,r-1}( {f}_i), i=1,2,\cdots,6\right\},
	\end{align*}	
    \item Interior DOFs $\bm M_{ {K}}(  {\bm u})$:
	\begin{align*}
	&\bm M_{ {K}}(  {\bm u})=\left\{\int_{ {K}}  {\bm u}\cdot  {\bm q}\mathrm \d V,\ \forall \bm q=B_K^{-T}\hat{ \bm q},\ \hat{\bm q} \in  \mathcal{D} \right\},
	\end{align*}
	where $\mathcal{D}=\nabla Q_{k-3}( \hat K)\oplus [Q_{r-2,r-1,r-1}(\hat K)\times Q_{r-1,r-2,r-1}(\hat K)\times Q_{r-1,r-1,r-2}(\hat K)\slash \nabla Q_{r-1}(\hat K)]$ when $k\geq 3$; $\mathcal{D}=[Q_{r-2,r-1,r-1}(\hat K)\times Q_{r-1,r-2,r-1}(\hat K)\times Q_{r-1,r-1,r-2}(\hat K)\slash \nabla Q_{r-1}(\hat K)]$ when $k=2$ and $r\geq 2$; $\mathcal{D}=\emptyset$ when $k=2$ and $r<2$.
		\end{itemize}

The same theoretical results as the tetrahedral elements can be obtained by a similar argument. We omit them for this case.

\begin{figure}[!t]
\includegraphics[width=1\textwidth]{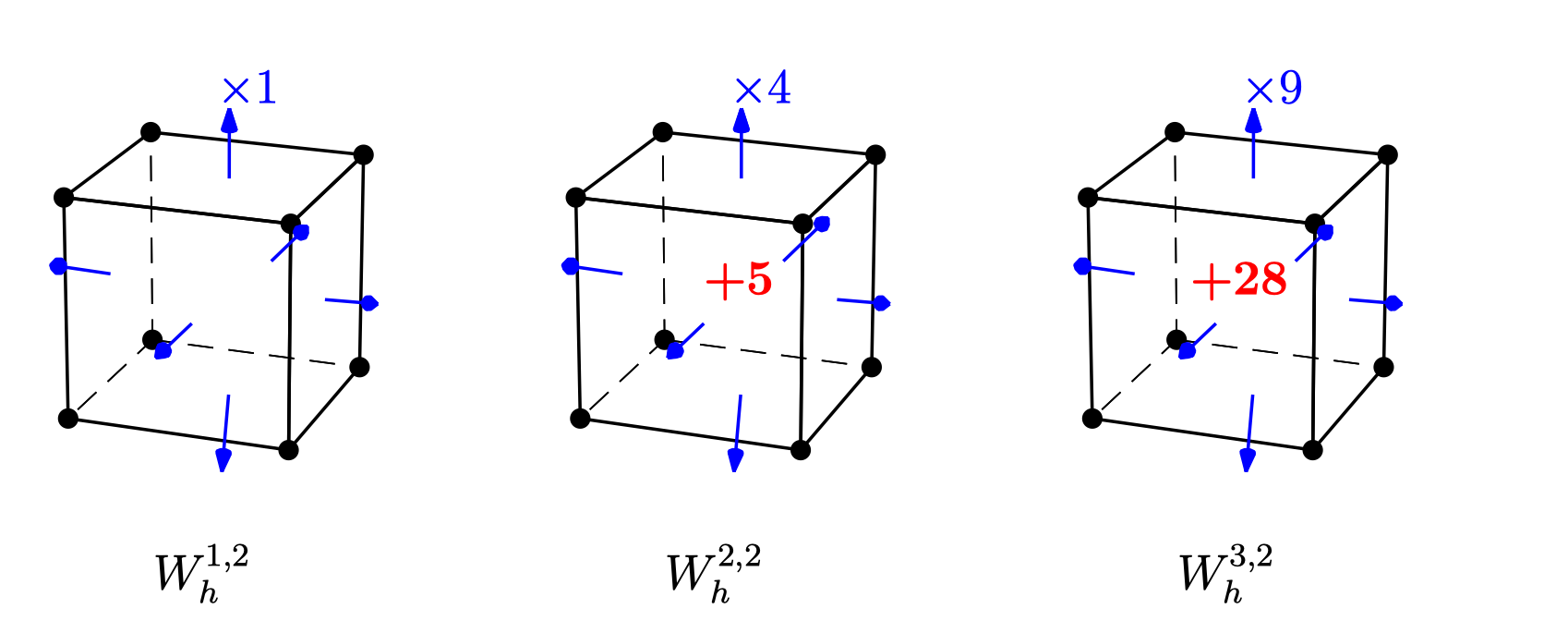}
\caption{The three versions of lowest-order ($k=2$) grad-div finite elements on cuboids}
\label{fig:third-family-cube}
\end{figure}

\section{The application to quad-div problems}
In this section, we use the $H(\grad \div)$-conforming finite elements  to solve the quad-div problem which is stated as follows:

For $\bm  f\in H(\curl;\Omega)$ and $\bm g\in L^2(\Omega)$, find $\bm u$, such that
\begin{equation}\label{prob1}
\begin{split}
(\nabla\nabla\cdot)^2\bm u+\bm u&=\bm f\ \ \text{in}\;\Omega,\\
\nabla \times \bm u &=\bm g\ \ \text{in}\;\Omega,\\
\bm u\cdot\bm n&=0\ \ \text{on}\;\partial \Omega,\\
\nabla \cdot  \bm u&=0\ \  \text{on}\;\partial \Omega.
\end{split}
\end{equation}
Here, to make the problem consistent, $\bm g=\nabla\times \bm f$.
By taking curl on both sides of the first equation of \eqref{prob1}, we see that the condition $\nabla\times\bm u=\bm g$ holds automatically.

We define $H_0(\grad\div;\Omega)$ and $H_0(\div;\Omega)$ with vanishing boundary conditions:
\begin{align*}
H_0(\grad \div;\Omega)&:=\{\bm u \in H(\grad \div;\Omega):\;\bm u\cdot{\bm n}=0\; \text{and}\; \nabla\cdot \bm u=0\;\; \text{on}\ \partial \Omega\},\\
H_0(\div;\Omega)&:=\{\bm u \in H(\div;\Omega):\;\bm u\cdot{\bm n}=0\; \text{on}\ \partial \Omega\}.
\end{align*}
The variational formulation is to seek $\bm u\in H_0(\grad\div;\Omega)$  such that
\begin{equation}\label{prob22}
\begin{split}
a(\bm u,\bm v)&=(\bm f, \bm v)\quad \forall \bm v\in H_0(\grad\div;\Omega),
\end{split}
\end{equation}
with $a(\bm u,\bm v):=\big(\nabla(\nabla\cdot\bm u),\nabla(\nabla\cdot\bm v)\big) + (\bm u,\bm v)$.

It follows from Lax-Milgram Lemma that \eqref{prob22} is well-posedness.
Taking $\bm v=\nabla\times\bm \psi$ with $\bm \psi \in H_0(\curl;\Omega)$ in \eqref{prob22} leads to
\[(\bm u, \nabla\times\bm \psi)=(\bm f, \nabla\times\bm \psi)=(\bm g, \bm \psi),\]
which implies $\nabla\times \bm u=\bm g$ holds in the sense of $\bm L^2(\Omega)$.
Since the regularity of the solution plays a crucial role in the error analysis, we will first derive a regularity result for the quad-div problem before proceeding further.

\begin{lemma}\label{reglaplace}
	Suppose $\Omega$ is a polyhedron. Consider the problem $-\Delta u=f$ in $\Omega$ with $u=0$ on $\partial \Omega.$ There exists a constant $\alpha_0>1/2$ satisfying the same conditions as in \cite[Theorem 2.2.1]{reg-laplace} such that
	\[u\in H^{1+\alpha_0}(\Omega) \text{ when } f\in H^{\alpha_0-1}(\Omega).\]
In particular, when $\Omega$ is convex, $\alpha_0$ is at least 1.
\end{lemma}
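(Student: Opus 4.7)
The plan is to invoke the general regularity theory for the Dirichlet Laplacian on polyhedral domains, which is precisely the content of \cite[Theorem 2.2.1]{reg-laplace}, and then specialize the conclusion to the convex situation. Since the statement is essentially an application of an external theorem, most of the work consists in matching the hypotheses and translating the conclusion into the standard $H^s$ scale used in this paper.

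First, I would recall the structure underlying that theorem. On a polyhedron $\Omega$, the solution of $-\Delta u=f$ with $u=0$ on $\partial\Omega$ admits a splitting into a regular part plus a finite linear combination of edge and vertex singularities. The strength of these singularities is governed, respectively, by the opening angles along edges and by the first eigenvalues of the Laplace--Beltrami operator on the spherical cap at each vertex, see the classical treatments of Kondrat'ev, Dauge, and Grisvard. The critical exponent $\alpha_0$ is defined as the infimum of these singular exponents over all edges and corners; the cited theorem asserts $\alpha_0>1/2$ under mild geometric conditions on $\Omega$.

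Next I would check that the datum $f\in H^{\alpha_0-1}(\Omega)$ is compatible with the shift estimate available from \cite[Theorem 2.2.1]{reg-laplace}. Since $\alpha_0-1\in(-1/2,1]$, the right-hand side sits inside the range where the theorem's estimate $\|u\|_{1+\alpha_0}\leq C\|f\|_{\alpha_0-1}$ holds, and the desired inclusion $u\in H^{1+\alpha_0}(\Omega)$ follows immediately.

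For the convex case the claim $\alpha_0\geq 1$ reduces to the classical $H^2$-regularity of the Dirichlet Laplacian on a convex polyhedron: for $f\in L^2(\Omega)$ one has $u\in H^2(\Omega)\cap H^1_0(\Omega)$ with $\|u\|_2\leq C\|f\|_0$. This is Kadlec's theorem, extended by Grisvard from smooth convex domains to convex polyhedra, and identifies $\alpha_0=1$ as an admissible value. The main obstacle I anticipate is essentially bookkeeping, namely matching the definition of $\alpha_0$ and the function-space framework in \cite{reg-laplace} (which may involve weighted Sobolev spaces near edges and vertices) with the unweighted $H^s$ scale used here, and ensuring that the geometric conditions on the edge and vertex angles are inherited without modification.
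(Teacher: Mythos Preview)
Your proposal is correct in substance, and there is little to compare: the paper does not actually prove this lemma but states it as a direct citation of \cite[Theorem 2.2.1]{reg-laplace}, so any ``proof'' amounts to invoking that external result, which is exactly what you do. Your additional discussion of the edge/vertex singularity exponents defining $\alpha_0$ and the appeal to Kadlec--Grisvard for the convex case is accurate and goes somewhat beyond what the paper records, but it is consistent with the standard theory and with the intended meaning of the citation.
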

\begin{theorem}\label{regularity}
	Under the assumption of $\Omega$, we assume further $\Omega$ is polyhedron. For $1/2<\alpha\leq\min\{\alpha_0,1\}$ with $\alpha_0$ defined in Lemma \ref{reglaplace}, the solution $\bm u$ of \eqref{prob1} satisfies
	$\bm u\in \bm  H^{\alpha}(\Omega)$ and $\nabla \cdot \bm u\in H^{1+\alpha}(\Omega)$. Moreover, it admits the following decomposition:
\begin{align*}
	\bm u=\nabla\times \bm w^{sing}+\bm u^{reg}
\end{align*}
with $\bm u^{reg}\in \bm H^{2+\alpha}(\Omega)$ and $\bm w^{sing}\in \bm H^{1+\alpha}(\Omega)$.
\end{theorem}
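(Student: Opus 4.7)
The strategy is to upgrade the Sobolev regularity of $p := \nabla\cdot\bm u$ using the strong form of \eqref{prob1} as a disguised Poisson equation for $p$, to obtain the overall regularity of $\bm u$ from a polyhedral embedding for $\bm H(\curl)\cap \bm H_0(\div)$, and finally to produce the decomposition by inverting the divergence against $p$ and writing the divergence-free remainder as the curl of a vector potential.

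First, I would rewrite the first line of \eqref{prob1} as
$$\nabla(\Delta p) = \bm f - \bm u \in \bm L^2(\Omega),$$
so that $\Delta p$ agrees (up to an additive constant) with an $H^1(\Omega)$ function; in particular $\Delta p\in L^2(\Omega)$. Since $p\in H^1_0(\Omega)$ from the boundary condition $\nabla\cdot\bm u|_{\partial\Omega}=0$, Lemma~\ref{reglaplace} applied to the Dirichlet problem $-\Delta p = -\Delta p$ yields $p\in H^{1+\alpha_0}(\Omega)$, hence $\nabla\cdot\bm u = p\in H^{1+\alpha}(\Omega)$ for every $\alpha\le\min\{\alpha_0,1\}$. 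To obtain $\bm u\in \bm H^\alpha(\Omega)$, I would then invoke a Costabel--Dauge-type embedding for polyhedra: since $\bm u\cdot\bm n|_{\partial\Omega}=0$ and $\nabla\times\bm u = \nabla\times\bm f\in \bm L^2(\Omega)$, we have $\bm u\in \bm H(\curl;\Omega)\cap \bm H_0(\div;\Omega)\hookrightarrow \bm H^\alpha(\Omega)$.

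For the decomposition, I would apply a Bogovskii-type right-inverse of the divergence to produce $\bm u^{reg}\in \bm H^{2+\alpha}(\Omega)$ with $\nabla\cdot\bm u^{reg}=p$ and $\bm u^{reg}\cdot\bm n = 0$ on $\partial\Omega$; the compatibility $\int_\Omega p = \int_{\partial\Omega}\bm u\cdot\bm n = 0$ is automatic. The residual $\bm v := \bm u - \bm u^{reg}$ then lies in $\bm H^\alpha(\Omega)\cap \bm H_0(\div;\Omega)$ and is divergence-free, so since $\Omega$ is simply connected the regular vector potential theorem supplies $\bm w^{sing}\in \bm H^{1+\alpha}(\Omega)$ with $\nabla\times\bm w^{sing}=\bm v$, giving the decomposition $\bm u = \bm u^{reg} + \nabla\times\bm w^{sing}$.

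The main technical obstacle is justifying the three polyhedral regularity tools at the stated Sobolev exponent: the embedding $\bm H(\curl)\cap \bm H_0(\div)\hookrightarrow \bm H^\alpha$, the $H^{1+\alpha}\to H^{2+\alpha}$ lift of the Bogovskii right-inverse, and the gain-of-one-derivative vector potential theorem. All three are controlled by the singular exponents at corners and edges of $\Omega$, which is precisely what the cutoff $\alpha\le\min\{\alpha_0,1\}$ encodes; once these ingredients are secured, the chain of implications closes in a single pass.
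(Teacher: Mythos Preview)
Your first two steps coincide with the paper's: rewrite the equation as $\nabla(\Delta p)=\bm f-\bm u$ to place $\Delta p$ in $H^1$, apply Lemma~\ref{reglaplace} to obtain $p=\nabla\cdot\bm u\in H^{1+\alpha}$, and invoke the polyhedral embedding $\bm H(\curl)\cap\bm H_0(\div)\hookrightarrow\bm H^\alpha$ (the paper cites \cite{amrouche1998vector} for this).

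The construction of the decomposition differs. Rather than a Bogovskii right-inverse on the polyhedron, the paper extends $\bm u$ by zero to a smooth contractible open set $\mathcal{O}\supset\bar\Omega$, solves the Dirichlet problem $-\Delta\psi=-\nabla\cdot\tilde{\bm u}$ on $\mathcal{O}$, and sets $\bm u^{reg}=\nabla\psi|_\Omega$. Full elliptic regularity on the smooth domain $\mathcal{O}$ yields $\psi\in H^{3+\alpha}(\mathcal{O})$ directly, so $\bm u^{reg}\in\bm H^{2+\alpha}(\Omega)$ without any appeal to corner or edge exponents. The remainder $\bm u-\nabla\psi\in\bm H^\alpha(\Omega)$ is divergence-free and is written as $\nabla\times\bm w^{sing}$ with $\bm w^{sing}\in\bm H^{1+\alpha}(\Omega)$ via the regular vector-potential result from Girault--Raviart, as you also propose. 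Two remarks on your route: (i) the Bogovskii lift $H^{1+\alpha}\to H^{2+\alpha}$ requires the input to lie in $H^{1+\alpha}_0(\Omega)$, and for $\alpha>1/2$ this is strictly stronger than $p|_{\partial\Omega}=0$, so that step is not merely a matter of corner exponents as your closing paragraph suggests --- the paper's extension-to-smooth-domain device is precisely what circumvents this; (ii) the condition $\bm u^{reg}\cdot\bm n=0$ is unnecessary, since the vector-potential step only needs $\nabla\cdot\bm v=0$ and $\bm v\in\bm H^\alpha(\Omega)$, and indeed the paper's $\bm u^{reg}=\nabla\psi$ carries no boundary condition on $\partial\Omega$.
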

\begin{proof}
	Let $w=\nabla\cdot \bm u$, then, from the first equation of \eqref{prob1}, we have 
	\[\nabla(\nabla\cdot\nabla w)=\nabla(\Delta w)=\bm f-\bm u\in \bm L^2(\Omega),\]
	which implies $\Delta w\in H^1(\Omega).$
	Note that $w=\nabla\cdot \bm u=0$ on $\partial \Omega$. Applying Lemma \ref{reglaplace},
	we obtain $w=\nabla\cdot\bm u\in H^{1+\alpha_1}(\Omega)$ with $1/2<\alpha_1\leq\alpha_0.$
	
	Since $\bm u\in H_0(\div;\Omega)\cap H(\curl;\Omega)\hookrightarrow \bm H^{\alpha_2}(\Omega)$ with $1/2<\alpha_2\leq \min\{1,\alpha_0\}$ \cite[Proposition 3.7]{amrouche1998vector}, we have $\bm u\in \bm H^{\alpha_2}(\Omega)$.
Taking $\alpha=\min\{\alpha_1,\alpha_2\}$ yields $\bm u\in \bm  H^{\alpha}(\Omega)$ and $\nabla \cdot \bm u\in H^{1+\alpha}(\Omega)$. 

	Let $\mathcal{O}$ be a bounded, smooth, contractible open set with $\bar{\Omega}\subset\mathcal{O}$.
For the solution $\bm u$, we can extend $\bm u$ in the following way:
\begin{align*}
\bm {\tilde{u}}&=\begin{cases}
\bm u,&\Omega,\\
0,&\mathcal{O}-\bar{\Omega}.\\
\end{cases}
\end{align*}
It follows from $\bm u\in H_0(\div;\Omega)$ and $\nabla\cdot \bm u\in H^{1+\alpha}(\Omega)\cap H_0^1(\Omega)$ that $\bm {\tilde{u}}\in H_0(\div;\mathcal{O})$ and $\nabla \cdot \bm {\tilde{u}}\in H_0^1(\mathcal{O})\cap H^{1+\alpha}(\mathcal O)$. 
We consider the problem of
finding $\psi$ defined in $\mathcal{O}$ such that
\begin{align}\label{lap-01}
-\triangle \psi&=-\nabla \cdot \bm {\tilde{u}}\in H^{1+\alpha}(\mathcal O)\ \text{in}\ \mathcal{O},\\
\psi&=0\ \text{on}\ \partial \mathcal{O}.\label{lap-02}
\end{align}
By the regularity result of the Laplace problem \cite[Theorem 1.8]{Girault2012Finite}, there exists a function $\psi\in H^{3+\alpha}(\mathcal{O})$ satisfying \eqref{lap-01} and \eqref{lap-02}.
Rewriting \eqref{lap-01} and restricting on $\Omega$, we have
$$
\nabla\cdot(\bm {{u}}-\nabla \psi)=0 \text{ in }  \Omega
$$
with $\bm {{u}}-\nabla \psi\in \bm H^{\alpha}(\Omega)$. According to \cite[Remark 3.12]{Girault2012Finite}, there exists $\bm w \in \bm H^{1+\alpha}(\Omega)$ such that
\begin{align}\label{curlw}
	\bm {{u}}-\nabla \psi=\nabla\times\bm w \text { and } \nabla\cdot \bm w=0 \text { in } \Omega.
\end{align}
Denoting $\bm u^{reg}=\nabla\psi\in \bm H^{2+\alpha}(\Omega)$ and $\bm w^{sing}=\bm w\in \bm H^{1+\alpha}(\Omega)$, we have, from \eqref{curlw}, that
$\bm u=\nabla\times \bm w^{sing}+\bm u^{reg}$.
\end{proof}


\begin{remark}
	With Theorem \ref{regularity}, the interpolation $\bm i_h\bm u$ is well-defind.
\end{remark}

 We now present the finite element scheme. We define the finite element space with vanishing boundary conditions
\begin{eqnarray*}
  W^0_h=\{\bm{v}_h\in W^{r-1,k}_h,\ \bm{n} \cdot \bm{v}_h=0\ \text{and}\ \nabla\cdot \bm{v}_h = 0 \ \text {on} \ \partial\Omega\}.
\end{eqnarray*}
The $H(\grad \div)$-conforming finite element method reads: seek $\bm u_h\in W^0_h$,  such that
\begin{equation}\label{prob3}
\begin{split}
 a(\bm u_h,\bm v_h)&=(\bm f, \bm v_h)\quad \forall \bm v_h\in W^0_h.
\end{split}
\end{equation}

The following approximation property of $\bm u_h$ follows immediately from C\'ea's lemma and the duality argument. 
	\begin{theorem}\label{uh_approx}
For $r=k-1$, $r=k$, or $r=k+1$, if $\bm u\in \bm H^{s+(r-k)}(\Omega)$ and $\nabla\cdot\bm u\in  H^s(\Omega)$, $3/2+\delta\leq s \leq k$ with $\delta>0$, then we have the following error estimates for the numerical solution $\bm u_h$,
	\begin{align}
	\left\|\bm u-\bm u_h\right\|&\leq Ch^{\min\{s+(r-k),2(s-1)\}}(\left\|\bm u\right\|_{s+(r-k)}+\left\|\nabla\cdot\bm u\right\|_{s}),\label{uh}\\
	&\left\|\nabla\cdot(\bm u-\bm u_h)\right\|\leq Ch^{\min\{s,2(s-1)\}}\left(\left\|\bm u\right\|_{s-1}+\left\|\nabla\cdot\bm u\right\|_{s}\right),\label{divuh}\\
	&\left|\nabla\cdot(\bm u-\bm u_h)\right|_1\leq Ch^{s-1}\left(\left\|\bm u\right\|_{s-1}+\left\|\nabla\cdot\bm u\right\|_{s}\right).\label{divdivuh}
	\end{align}
\end{theorem}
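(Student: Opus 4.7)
The plan is to combine a C\'ea-type quasi-optimality estimate in the natural energy norm with an Aubin--Nitsche duality argument, plugging in the interpolation bounds from the previous theorem. The bilinear form $a(\bm u,\bm v)=(\nabla\nabla\cdot\bm u,\nabla\nabla\cdot\bm v)+(\bm u,\bm v)$ is manifestly continuous and coercive on $H_0(\grad\div;\Omega)$ with respect to the graph norm $|||\bm v|||^2:=\|\bm v\|^2+|\nabla\cdot\bm v|_1^2$. Subtracting \eqref{prob3} from \eqref{prob22} restricted to test functions in $W_h^0$ yields Galerkin orthogonality, and the standard C\'ea argument gives
\begin{align*}
|||\bm u-\bm u_h|||\le C\inf_{\bm v_h\in W_h^0}|||\bm u-\bm v_h|||\le C\bigl(\|\bm u-\bm i_h\bm u\|+|\nabla\cdot(\bm u-\bm i_h\bm u)|_1\bigr),
\end{align*}
where Theorem \ref{regularity} guarantees that $\bm i_h\bm u$ is well defined. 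Inserting \eqref{inter-u}--\eqref{inter-divdivu} immediately produces \eqref{divdivuh}.

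For the sharpened $L^2$ estimates \eqref{uh} and \eqref{divuh}, I would apply the Aubin--Nitsche duality trick. For \eqref{uh}, introduce $\bm\phi\in H_0(\grad\div;\Omega)$ solving the dual problem $a(\bm\phi,\bm v)=(\bm u-\bm u_h,\bm v)$ for every admissible $\bm v$. Testing with $\bm v=\bm u-\bm u_h$, using Galerkin orthogonality to subtract $\bm i_h\bm\phi$, and applying Cauchy--Schwarz gives
\begin{align*}
\|\bm u-\bm u_h\|^2\le C\,|||\bm\phi-\bm i_h\bm\phi|||\cdot|||\bm u-\bm u_h|||.
\end{align*}
Theorem \ref{regularity} applied to the dual problem supplies the regularity of $\bm\phi$ and $\nabla\cdot\bm\phi$ needed for the interpolation bounds to deliver an extra $h^{s-1}$ factor; multiplied by the primal energy-norm $h^{s-1}$ from the first paragraph, this yields the exponent $2(s-1)$. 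The $\min$ in the stated bound simply reflects that this duality gain is capped by the primal $L^2$ interpolation rate $h^{s+(r-k)}$ whenever primal regularity is the binding constraint. For \eqref{divuh}, I would run the same argument with the dual source built from $\nabla\cdot(\bm u-\bm u_h)$ (lifted so that the dual equation is well posed), so that testing recovers $\|\nabla\cdot(\bm u-\bm u_h)\|^2$ on the left-hand side while the right-hand side still benefits from one factor of $h^{s-1}$ from the dual interpolation.

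The main obstacle is the duality step: one has to verify that the dual problems meet the structural hypotheses of Theorem \ref{regularity} (in particular the consistency condition, which here reduces to checking solenoidal and boundary compatibilities for the chosen source data) and that the regularity $\alpha>1/2$ furnished by that theorem is enough to render the dual-side interpolation estimates legitimate. Once this is in place, what remains is bookkeeping: substituting the interpolation bounds, combining energy-norm and duality factors, and taking the pointwise minimum with the primal rates to write the exponents in the form $\min\{s+(r-k),2(s-1)\}$ and $\min\{s,2(s-1)\}$.
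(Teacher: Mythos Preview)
Your proposal is correct and matches the paper's own approach: the paper states only that the result ``follows immediately from C\'ea's lemma and the duality argument,'' giving no further details, and your outline of C\'ea for the energy-norm estimate \eqref{divdivuh} followed by Aubin--Nitsche duality for \eqref{uh} and \eqref{divuh} is exactly this. Your identification of the duality-regularity verification as the main point to check is also appropriate, since the paper leaves this entirely to the reader.
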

\begin{remark}
	When $\Omega$ is a Lipschitz polyhedron, even with the lowest regularity, the scheme still has a convergence order $1/2$ in $H(\grad\div)$ norm. 
\end{remark}

\section{Numerical Experiments}
We now turn to a concrete example to test our new elements. We consider the problem \eqref{prob1} on a unit cube $\Omega=(0,1)\times(0,1)\times(0,1)$ with an exact solution
	\begin{equation}
	\bm u=\nabla \left(x^3y^3z^3(x-1)^3(y-1)^3(z-1)^3\right).
	\end{equation}
   Then, by a simple calculation, the source term $\bm f$ can be derived. Note that in this case $g=0$. We denote the finite element solution  as $\bm u_h$. To measure the error between the exact solution and the finite element solution, we also denote 
	\[\bm e_h=\bm u-\bm u_h.\]

\begin{example}
In this example, we test the tetrahedral elements. To this end, we partition the unit cube into $N^3$ small cubes and then partition each small cube into 6 congruent tetrahedra. We  use the lowest-order elements in  three families  to solve the problem \eqref{prob1} on the uniform tetrahedral mesh. 

Tables \ref{tetra-tab1}, \ref{tetra-tab2}, and \ref{tetra-tab3} illustrate various errors and convergence rates for three families.  
We observe from the tables that the numerical solution converges to the exact solution with a convergence order 1 for the family $r=k-1$, 2 for the family $r=k$, and $2$ for the family $r=k+1$ in the sense of $L^2$-norm. In addition,  the three families have the same convergence order 2 in the $H(\div)$-norm and 1 in the $H(\grad \div)$-norm, respectively. All the results coincide with Theorem \ref{uh_approx}, which confirms the correctness of the elements and their properties.

\begin{table}[!ht]
	\centering
	\caption{Example 1: Numerical results by  the lowest-order $(k=2)$ tetrahedral element in the  family $(r=k-1)$ of $H(\grad \div)$-conforming elements} \label{tetra-tab1}
	\begin{tabular}{cccccccc}
		\hline
		$N$ &$\left\|\bm e_h\right\|$&rates&$\left\|\nabla\cdot\bm e_h\right\|$&rates&$\left\|\nabla(\nabla\cdot\bm e_h)\right\|$& rates\\
		\hline
		$ 16$ &7.338806e-07  && 3.773907e-06& &1.261805e-04		& \\
		$ 20$&5.585337e-07 &1.2236&2.462834e-06&1.9127&1.016297e-04 &0.9697\\
		$24$&4.511530e-07& 1.1711&1.728736e-06&1.9412&8.500500e-05& 0.9797& \\
		$28$&3.788654e-07& 1.1328&1.278389e-06&1.9578&7.302452e-05&0.9855& \\
        $32$&3.268841e-07& 1.1052&9.829309e-07&1.9682&6.398944e-05&0.9891& \\
		\hline
	\end{tabular}
\end{table}

\begin{table}[!ht]
	\centering
	\caption{Example 1: Numerical results by  the lowest-order $(k=2)$ tetrahedral element in the  family $(r=k)$ of $H(\grad \div)$-conforming elements} \label{tetra-tab2}
	\begin{tabular}{cccccccc}
		\hline
		$N$ &$\left\|\bm e_h\right\|$&rates&$\left\|\nabla\cdot\bm e_h\right\|$&rates&$\left\|\nabla(\nabla\cdot\bm e_h)\right\|$& rates\\
		\hline
		$ 8$ &1.232033e-06 && 1.150197e-05& &3.902786e-04 & \\
		$ 12$ &5.905553e-07&1.8136& 5.614381e-06&1.7688 &1.654137e-04& 0.8952 \\
		$ 16$&3.416300e-07 &1.9026&3.269987e-06&1.8790 &1.259942e-04& 0.9462\\
		$ 20$&2.215699e-07&1.9404&2.127621e-06&1.9260&1.015312e-04&0.9674\\
		$ 24$&1.549977e-07&1.9599&1.490982e-06&1.9502&8.494707e-05&0.9782\\
		\hline
	\end{tabular}
\end{table}

\begin{table}[!ht]
	\centering
	\caption{Example 1: Numerical results by  the lowest-order $(k=2)$ tetrahedral element in the  family $(r=k+1)$ of $H(\grad \div)$-conforming elements} \label{tetra-tab3}
	\begin{tabular}{cccccccc}
		\hline
		$N$ &$\left\|\bm e_h\right\|$&rates&$\left\|\nabla\cdot\bm e_h\right\|$&rates&$\left\|\nabla(\nabla\cdot\bm e_h)\right\|$& rates\\
		\hline
		$ 8$ &1.224295e-06&& 1.149723e-05& &2.377994e-04& \\
		$ 10$&8.220074e-07& 1.7853  &7.812974e-06&1.7313& 1.954954e-04&0.8779\\
		$ 12$ &5.864916e-07  & 1.8516& 5.613355e-06& 1.8135 &1.654135e-04&0.9165 \\
		$ 14$&4.381462e-07& 1.8917  &4.211742e-06 &1.8636& 1.431136e-04 &0.9394\\
		$ 16$&3.391664e-07 &1.9176 &3.269652e-06&1.8961&1.259941e-04 &0.9541\\
		\hline
	\end{tabular}
\end{table}

\end{example}

\begin{example}

In this example, we test the cuboid grad-div conforming elements. We use uniform cubic meshes with the mesh size $h$ varying from ${1}/{12}$ to ${1}/{20}$. 
Unlike tetrahedral elements, in this test, we also explore superconvergence of the cuboid elements. To this end, we denote $\{w_n\}_{n=1}^p$ and $\{g_n\}_{n=1}^p$ as the weights and nodes of Legendre-Gauss quadrature rule of an order $p$. We also denote $\{w_n^l\}_{n=1}^p$ and $\{l_n\}_{n=1}^p$ as the weights and nodes of  Legendre-Gauss-Lobbato quadrature rule of an order $p$. 
For $\bm u=(u_1,u_2,u_3)^T$, we define three discrete norms. 
\begin{align*}
&\3bar\bm u\3bar_{U}^2 = \sum_{K\in \mathcal{T}_h} \sum_{r,s,t=1}^k\omega_r^l\omega_s^l\omega_t^l\Big(h_x^Kh_y^Kh_z^K|\bm u(x_c^K+h_x^Kl_r,y_c^K+h_y^Kl_s,z_c^K+h_z^Kl_t)|^2\Big),\\
&\3bar\bm u\3bar_{V}^2\!=\!\sum_{K\in \mathcal{T}_h}\!\sum_{m,n=1}^{k+r-2}\omega_m\omega_n\!\Big(\!h_y^Kh_z^K
\left\|u_1(\cdot,y_c^K+h_y^Kg_m,z_c^K+h_z^Kg_n)\right\|^2\!+\!h_x^K
h_z^K\\
&~~\left\|u_2(x_c^K\!+\!h_x^Kg_m,\cdot,z_c^K\!+\!h_z^Kg_n)\right\|^2
\!+\!h_x^Kh_y^K\left\|u_3(x_c^K\!+\!h_x^Kg_m,y_c^K
\!+\!h_y^Kg_n,\cdot)\right\|^2\!\Big),
\end{align*}
and
\begin{align*}
\3bar\bm u\3bar_{W}^2=\sum_{K\in \mathcal{T}_h}\sum_{n=1}^{k-1}\omega_l\Big(&h_x^K\left\|u_1(x_c^K+h_x^Kg_n,\cdot,\cdot)\right\|^2
+h_y^K\left\|u_2(\cdot,y_c^K+h_y^Kg_n,\cdot)\right\|^2\\
&+h_z^K\left\|u_3(\cdot,\cdot, z_c^K+h_z^Kg_n)\right\|^2\Big),
\end{align*}
where $(x_c^K,y_c^K,z_c^K)$ is the center of element $K$ and  $2h_x^K,2h_y^K,2h_z^K$ are the lengths of edges parallel to $x,y,z$ axes, respectively.
 
Tables \ref{cube_tab1}, \ref{cube_tab2}, and \ref{cube_tab3} shows errors measured in various norms for the lowest-order cuboid elements in the three familes. We also depict error curves with a log-log scale in Fig. \ref{Fig:errorcurve}.
From Fig. \ref{Fig:errorcurve} (A), we can observe superconvergence phenomena that $\3bar\nabla(\nabla\cdot\bm e_h)\3bar_W$ and $\3bar\bm e_h\3bar_V$ converge to 0 with one order higher than $\left\|\nabla(\nabla\cdot\bm e_h)\right\|$ and $\left\|\bm e_h\right\|$. In addition, from Fig. \ref{Fig:errorcurve} (B)(C), we  can observe superconvergence of $\3bar\nabla(\nabla\cdot\bm e_h)\3bar_W$. 

We can not observe any superconvergence of $\3bar\bm e_h\3bar_V$ for $r=2,3$ and $\3bar\nabla\cdot\bm e_h\3bar_U$ for all the 3 families when $k=2$. To further investigate the superconvergence of $\nabla\cdot\bm e_h$, we test the third-order $(k=3)$ element in the family of $r=k-1$.  The results are shown in Table \ref{cube_tab4} and Fig. \ref{Fig:errorcurve}(D). In this case, we can observe superconvergence of $\nabla\cdot\bm e_h$.

Using these superconvergent results, together with some recovery techniques, we can construct a solution with higher accuracy if needed, which is one of the reasons that we explore the superconvergence of cuboid elements. 

We conclude this section by pointing out that the three families of elements bear their own advantages. 
The  family of $r=k-1$ can be the best choice if we pursue a low computational cost, while the  family with $r=k+1$ stands out for its higher accuracy in the $L^2$-norm without any recovery techniques.

\begin{table}[!ht]
	\centering
	\caption{Example 2: Numerical results by the lowest-order $(k=2)$ cuboid element in the family $(r=k-1)$ of $H(\grad \div)$-conforming elements} \label{cube_tab1}
	\begin{tabular}{cccccccc}
		\hline
		$h$ &$\left\|\bm e_h\right\|$&$\left\|\bm e_h\right\|_V$&$\left\|\nabla\cdot\bm e_h\right\|$&$\left\|\nabla\cdot\bm e_h\right\|_U$&$\left\|\nabla(\nabla\cdot\bm e_h)\right\|$& $\left\|(\nabla\times)^2\bm e_h\right\|_W$\\
		\hline
		$1\slash 8$ &1.2939e-06&8.2349e-07&6.6566e-06  & 2.7601e-06 &1.5795e-04
		&6.2427e-05\\
		$1\slash16$&5.6099e-07 &2.1371e-07&1.7020e-06& 6.6734e-07&7.6700e-05& 1.5975e-05\\
		$1\slash24$&3.6063e-07  &9.5663e-08&7.5957e-07 & 2.9471e-07 & 5.0814e-05&7.1306e-06\\
		$1\slash32$&2.6677e-07&5.3946e-08 &4.2787e-07&1.6541e-07 &3.8025e-05 &4.0170e-06  \\
		$1\slash40$&2.1201e-07&3.4566e-08
 &2.7402e-07&1.0575e-07&3.0388e-05&2.5726e-06 \\
		\hline
	\end{tabular}
\end{table}

\begin{table}[!ht]
	\centering
	\caption{Example 2: Numerical results by the lowest-order $(k=2)$ cuboid element in the  family $(r=k)$ of $H(\grad \div)$-conforming elements} \label{cube_tab2}
	\begin{tabular}{cccccccc}
		\hline
		$h$ &$\left\|\bm e_h\right\|$&$\left\|\bm e_h\right\|_V$&$\left\|\nabla\cdot\bm e_h\right\|$&$\left\|\nabla\cdot\bm e_h\right\|_U$&$\left\|\nabla(\nabla\cdot\bm e_h)\right\|$& $\left\|(\nabla\times)^2\bm e_h\right\|_W$\\
		\hline
		$1\slash 4$ &2.2275e-06 &2.1791e-06&  2.0877e-05&1.4226e-05&3.2323e-04
		&2.0116e-04  \\
		$1\slash 10$&3.2909e-07 &3.2124e-07& 3.2354e-06 &2.4023e-06&1.2317e-04 &3.1825e-05 \\
		$1\slash16$&1.2730e-07 & 1.2419e-07  &1.2547e-06 &9.2031e-07 &7.6282e-05 & 1.2340e-05  \\
		$1\slash22$&6.7137e-08& 6.5485e-08&6.6217e-07 & 4.8348e-07 & 5.5322e-05 & 6.5115e-06\\
		$1\slash28$&4.1395e-08&4.0373e-08 & 4.0839e-07&2.9756e-07&4.3414e-05&4.0157e-06\\
		\hline
	\end{tabular}
\end{table}

 \begin{table}[!ht]
	\centering
	\caption{Example 2: Numerical results by the lowest-order $(k=2)$ cuboid element in the  family $(r=k+1)$ of $H(\grad \div))$-conforming elements} \label{cube_tab3}
	\begin{tabular}{cccccccc}
		\hline
		$h$ &$\left\|\bm e_h\right\|$&$\left\|\bm e_h\right\|_V$&$\left\|\nabla\cdot\bm e_h\right\|$&$\left\|\nabla\cdot\bm e_h\right\|_U$&$\left\|\nabla(\nabla\cdot\bm e_h)\right\|$& $\left\|(\nabla\times)^2\bm e_h\right\|_W$\\
		\hline
		$1\slash 4$ &2.5839e-06&2.5818e-06 &  2.0796e-05&1.4263e-05& 3.2318e-04
		&2.0132e-04 \\
		$1\slash 10$&3.2119e-07&3.2115e-07&3.2315e-06 &2.4030e-06& 1.2317e-04 &3.1829e-05\\
		$1\slash16$&1.2417e-07 & 1.2416e-07 &1.2541e-06&9.2042e-07&7.6282e-05&1.2340e-05   \\
		$1\slash22$&6.5478e-08& 6.5476e-08&6.6200e-07&4.8351e-07&5.5322e-05&6.5117e-06\\
		\hline
	\end{tabular}
\end{table}

\begin{table}[!ht]
	\centering
	\caption{Example 2: Numerical results by the third-order $(k=3)$ cuboid element in the  family $(r=k-1)$ of $H(\grad \div)$-conforming elements} \label{cube_tab4}
	\begin{tabular}{cccccccc}
		\hline
		$h$ &$\left\|\bm e_h\right\|$&$\left\|\bm e_h\right\|_V$&$\left\|\nabla\cdot\bm e_h\right\|$&$\left\|\nabla\cdot\bm e_h\right\|_U$&$\left\|\nabla(\nabla\cdot\bm e_h)\right\|$& $\left\|(\nabla\times)^2\bm e_h\right\|_W$\\
		\hline
		$1\slash 4$ &6.3209e-07&2.2806e-07 &  2.9623e-06& 8.8580e-07 &7.8540e-05&2.5723e-05 \\
		$1\slash10$&7.6991e-08&1.1031e-08&1.8971e-07  &2.8011e-08&1.2378e-05 & 1.9063e-06\\
		$1\slash16$&2.8833e-08 & 2.5640e-09&4.6416e-08&4.3703e-09&4.8272e-06& 4.7467e-07 \\
		$1\slash22$&1.5050e-08& 9.7038e-10&1.7869e-08&1.2319e-09&2.5519e-06&1.8377e-07\\
		\hline
	\end{tabular}
\end{table}

 \begin{figure}[ht]
  \subfloat[$r=k-1,k=2$ ]{
	\begin{minipage}[c][0.8\width]{
	   0.45\textwidth}
	   \centering
	   \includegraphics[width=1\textwidth]{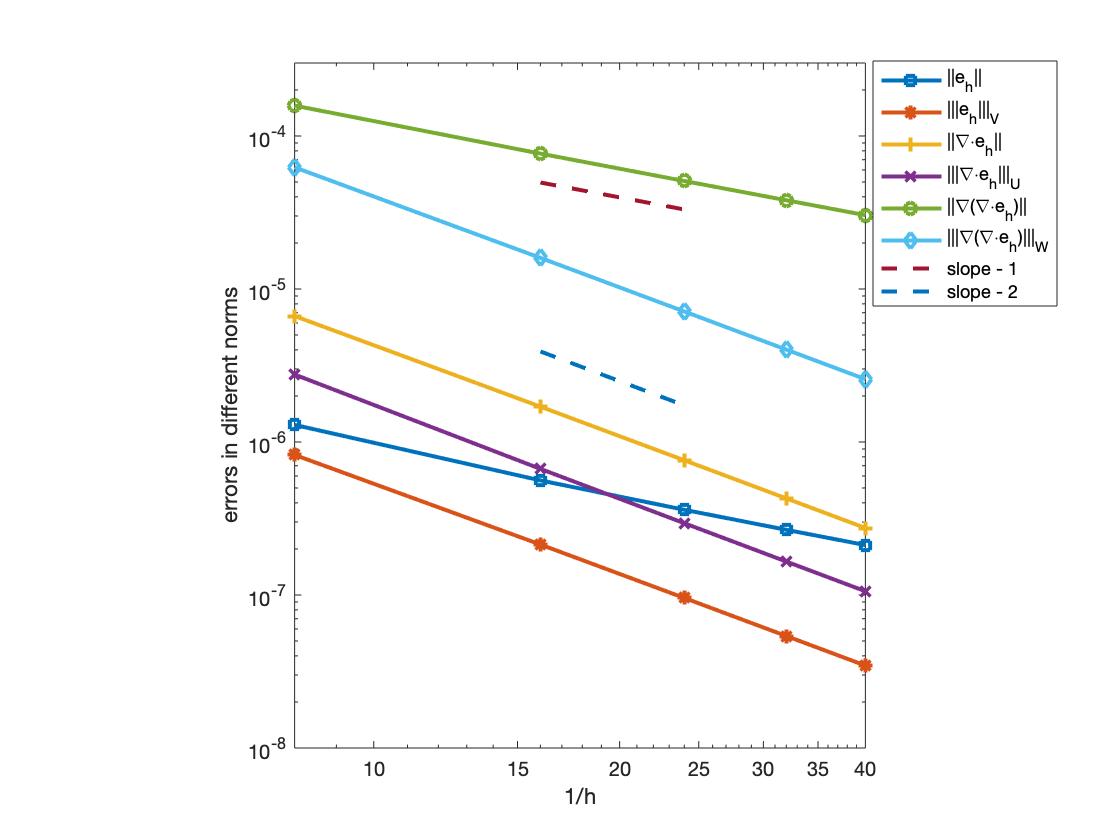}
	\end{minipage}}
 \hfill 	
  \subfloat[$r=k,k=2$ ]{
	\begin{minipage}[c][0.8\width]{
	   0.45\textwidth}
	   \centering
	   \includegraphics[width=1\textwidth]{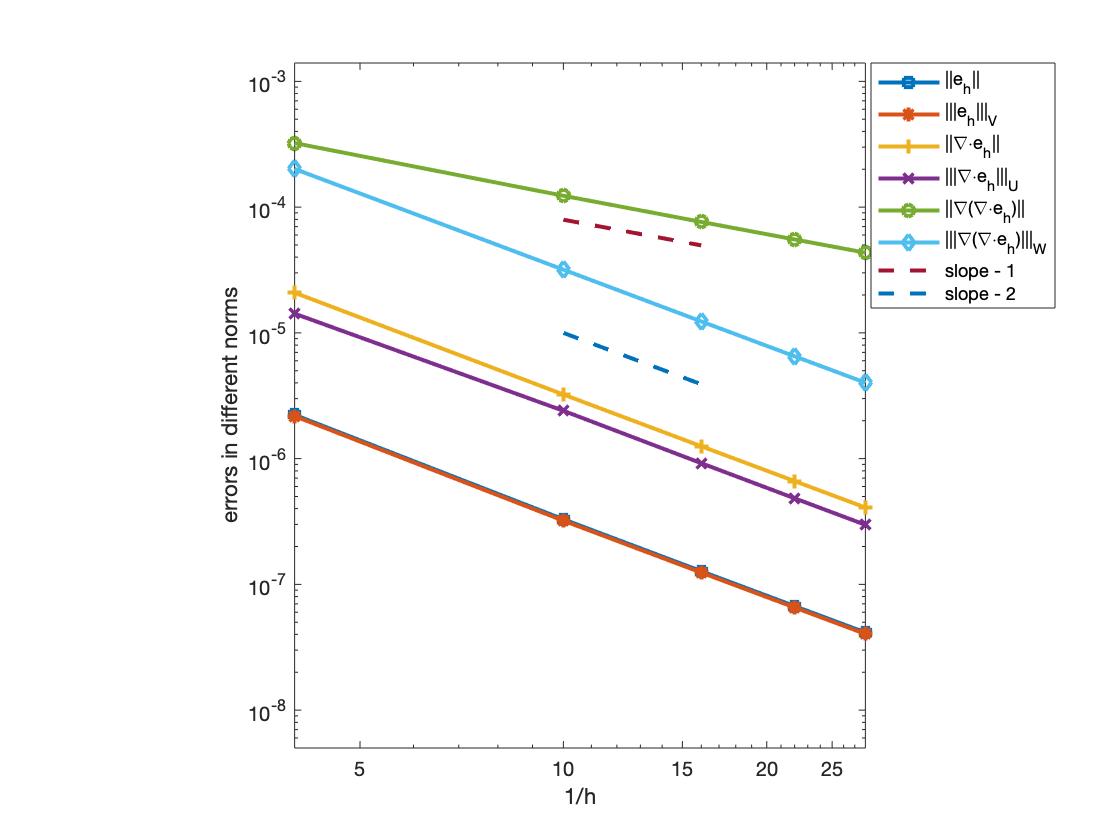}
	\end{minipage}}
	\hfill 
  \subfloat[$r=k+1,k=2$ ]{
	\begin{minipage}[c][0.8\width]{
	   0.45\textwidth}
	   \centering
	   \includegraphics[width=1\textwidth]{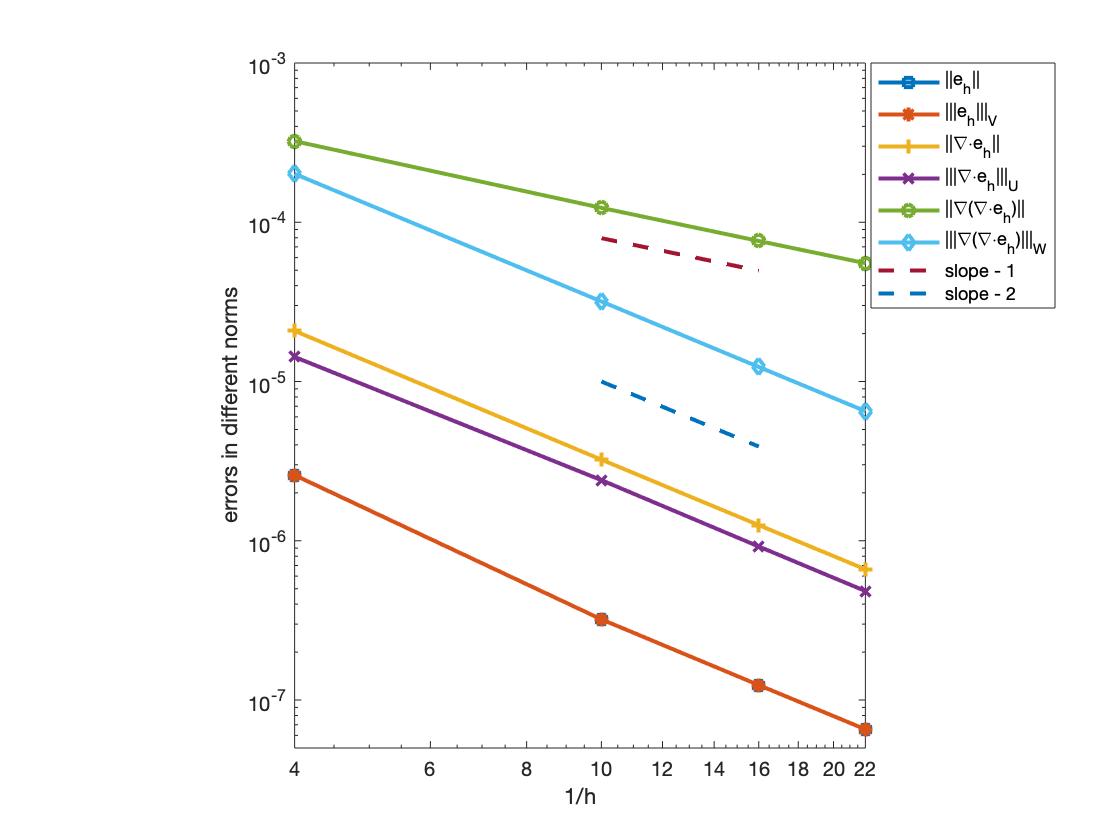}
	\end{minipage}}
	\hfill 
  \subfloat[$r=k-1,k=3$ ]{
	\begin{minipage}[c][0.8\width]{
	   0.45\textwidth}
	   \centering
	   \includegraphics[width=1\textwidth]{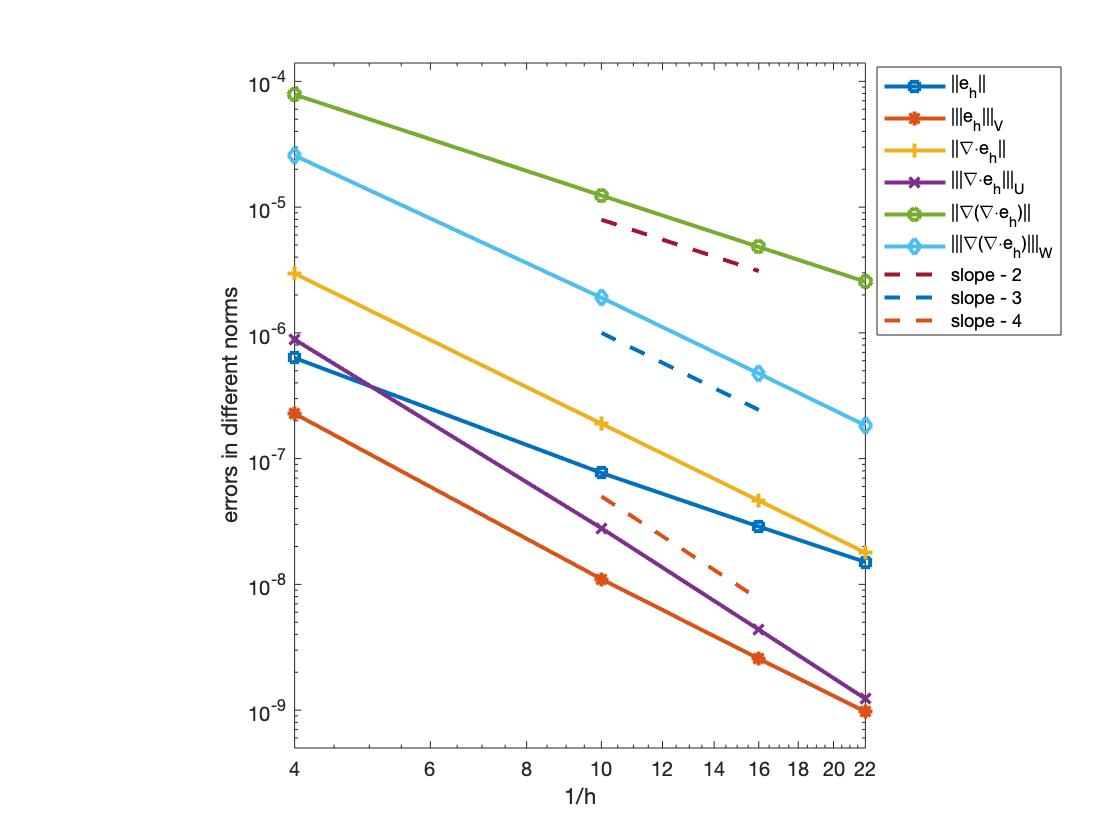}
	\end{minipage}}
 \caption{Error curves in different norms}\label{Fig:errorcurve}
\end{figure}


\end{example}

\section{Conclusion}
In this paper, we constructed conforming finite element de Rham complexes with enhanced smoothness. This naturally leads to grad-div conforming elements in 3D which can be utilized to solve the quad-div problem. The simplest elements in our construction have only 8 and 14 DOFs for a tetrahedron and a cuboid respectively, which makes commercial adoption of the elements feasible.

Since the quad-div problem has not been extensively studied both in mathematical theory and numerical methods, there are still some mysteries about the regularity and the discretization of this problem. However, by the de Rham complex, we relate the grad-div conforming elements to the FEEC. This allows further systematic developments of the new elements and the quad-div problem. We believe that the framework may shed new light into studying this problem.

In the future, we will apply the newly proposed elements to solve practical problems and further investigate the superconvergence phenomena. Moreover, we will construct simple finite element subcomplexes for the following variant of  de Rham complex:
\begin{align*}
\begin{diagram}
\mathbb{R} & \rTo^{\subset} & H^{1}(\Omega) & \rTo^{\nabla} & \bm\Phi(\Omega) & \rTo^{\nabla\times} & \bm H^1(\Omega)& \rTo^{\nabla\cdot} & L^2(\Omega)  & \rTo^{} & 0,
\end{diagram}
\end{align*}
where $\mathbf{\Phi}(\Omega):=\left\{\boldsymbol{v} \in \boldsymbol{L}^{2}(\Omega), \nabla\times\boldsymbol{v} \in \boldsymbol{H}^{1}(\Omega)\right\}$.

\bibliographystyle{plain}
\bibliography{quaddiv-3d-4}{}
~\newline
\end{document}